\documentclass{amsart}
\usepackage{amsmath}
\usepackage{amssymb}
\usepackage{mathrsfs}
\usepackage{pgf,tikz}
\usepackage{hyperref}
\usetikzlibrary{positioning}
\usetikzlibrary{shadings}
\usetikzlibrary{arrows,automata}
\usepackage{graphicx}
\usepackage{systeme,mathtools}
\usepackage{amsthm,enumitem}
\usepackage{systeme}
\input xy
\xyoption{all}

\newtheorem{theorem}{Theorem}[section]

\newtheorem{lemma}[theorem]{Lemma}
\newtheorem{corollary}[theorem]{Corollary}
\newtheorem{proposition}[theorem]{Proposition}
\theoremstyle{definition}
\newtheorem{definition}[theorem]{Definition}
\newtheorem{remark}{Remark}

\begin{document}
	
	\title[Almost locally simple artinian algebras with involutions]{Subnormal subgroups of almost locally simple artinian algebras with involutions}
	
	\author[Dau Thi Hue]{Dau Thi Hue}
	\address{Faculty of Mathematics and Computer Science, University of Science, Ho Chi Minh City, Vietnam} 
	\address{Vietnam National University, Ho Chi Minh City, Vietnam}
	\email{dthue.sgddt@tphcm.gov.vn}

	\author[Huynh Viet Khanh]{Huynh Viet Khanh}
	\address{Department of Mathematics and Informatics, HCMC University of Education, 280 An Duong Vuong Str., Dist. 5, Ho Chi Minh City, Vietnam}
	\email{khanhhv@hcmue.edu.vn}
	
	\author[Bui Xuan Hai]{Bui Xuan Hai}
	\address{Faculty of Mathematics and Computer Science, University of Science, Ho Chi Minh City, Vietnam} 
	\address{Vietnam National University, Ho Chi Minh City, Vietnam}
	\email{bxhai@hcmus.edu.vn}

	\keywords{symplectic involution; commuting involution; scalar involution; weakly locally division ring; locally simple artinian ring.\\ 
		\protect \indent 2020 {\it Mathematics Subject Classification.} 16K20. 16K40. 16W10. 16R20 }
	\begin{abstract} 
		In this paper, we investigate subnormal subgroups of the multiplicative group of an almost locally simple artinian algebra with involution. In particular, we show that if either the set of traces or the set of norms of such a subgroup with respect to this involution is central, then the  algebra must be either a quaternion division algebra or the matrix ring of degree $2$ over a field. 
	\end{abstract}
	\maketitle
	
	\section{Introduction}
	
	Let $A$ be an associative ring with identity $1\ne 0$. Recall that an \textit{involution} on $A$ is a map  $^{\star}:A\to A$ which satisfies the following conditions for all elements $x$ and $y$ in $A$:
	
	\begin{enumerate}
		\item[(a)] $(x+y)^{\star}=x^{\star}+y^{\star}$;
		\item[(b)] $(xy)^\star=y^\star x^\star$;
		\item[(c)] $(x^\star)^\star=x$.
	\end{enumerate}
	Let $A$ be such a ring, and let $Z(A)$ denote the center of $A$. It is straightforward to check that $Z(A)$ is preserved under the involution $\star$. The restriction of $\star$ to $Z(A)$ is therefore an automorphism which is either the identity or of order $2$. Accordingly, an involution which leaves  the elements of $Z(A)$ fixed is called an \textit{involution of the first kind}. An involution whose restriction to the center is an automorphism of order $2$ is called \textit{involution of the second kind}. Throughout this paper, we only investigate rings with involutions of the first kind. 
	
	Recall that the set of \textit{symmetric} elements, the set of \textit{traces}, and the set of \textit{ norms} of $A$ are defined as follows:
	$$
	\begin{aligned}
		& A_+=\{x\in A \;|\; x^\star = x\},\\
		& T_A=\{x+x^\star\;|\; x\in A\},\\
		& N_A=\{xx^\star\;|\; x\in A\}.
	\end{aligned}
	$$
	Clearly that the sets $T_A$ and $N_A$ both are contained in $A_+$. 

In a series of his papers  (see e.g. \cite{Chacron.2016a.paper}-\cite{Chacron.2017b.paper}),
Chacron has considered the following  conditions: 
\begin{enumerate}
	\item[(C1)] $x^\star x=xx^\star$, for all $x\in A$.
	\item[(C2)] For each $x$ in $A$, there exists a positive integer $N$ depending on $x$ such that $\mathrm{d}_x^N(x^\star)=0$, where $\mathrm{d}_x:A\to A$ is a map given by $y\mapsto yx-xy$, and $\mathrm{d}_x^N$ is the $N$-th power of $\mathrm{d}_x$ under composition.
	\item[(C3)] For each $x$ in $A$, there exists a positive integer $N$ depending on $x$ such that $x^\star x^N =x^Nx^\star$.
	\item[(C4)] $x+x^\star$ is central for all $x\in A$.
	\item [(C5)] $xx^\star$ is central for all $x\in A$.
\end{enumerate}

In accordance with \cite[Propostion 2.6]{knus-merkurjev-rost-tignol-98}, if the involution $\star$ satisfies (C1), (C4) or (C5), then it is said to be \textit{commuting}, \textit{symplectic} or \textit{scalar} respectively, while (C2) and (C3) are called \textit{local power commuting condition} and \textit{local Engel condition} respectively. The relationships between these conditions are imposed on following implications which are obvious:
$$
{\rm (C5)}\Rightarrow{\rm (C4)}\Rightarrow{\rm (C1)}\Rightarrow{\rm (C2)} \text{ and } {\rm (C3)}.
$$

Over the past few years, there have been many works devoted to the study of certain rings with involutions satisfying some of these conditions. It was shown in \cite[Theorem 4.7]{Chacron.2017a.paper} that if $A$ is a semiprime ring satisfying a polynomial identity, then the conditions (C1), (C2), and (C3) are equivalent. If $A$ is a semiprime ring instead, then it was proved in \cite[Theortem 2.3]{Chacron.2016a.paper} that (C1), (C4) and (C5) are equivalent. At the other extreme, it was shown in \cite[Theorem 2.13]{Chacron.2017b.paper} that in case $A$ is a non-commutative simple ring which is algebraic over its center, then (C2) is equivalent to (C4) and $A$ must be a quaternion algebra over its center.  

There are close relationships between the conditions introduced by Chacron and the conditions on the set $T_A$ of traces and on the set $N_A$ of norms of $A$. It is clear that $\star$ satisfies (C4) if and only if $T_A\subseteq Z(A)$, while $\star$ satisfies (C5) if and only if $N_A\subseteq Z(A)$. Thus, in view of \cite[Theorem 2.13]{Chacron.2017b.paper}, if $A$ is non-commutative simple ring algebraic over $Z(A)$ such that $T_A\subseteq Z(A)$, then $A$ is a quaternion algebra over its center. This fact particularly says, in some sense, that the condition $T_A\subseteq Z(A)$, or equivalently that $\star$ satisfies (C4),  is really a strong condition which imposes a special structure on the whole $A$. 

This paper is a sequel of \cite{bien-hai-hue}, where the  structures of subgroups of certain algebras with involutions were investigated. Let $G$ be a subgroup of the multiplicative subgroup $A^\times$ of a ring $A$ with involution $\star$. We denote by $T_G$ and $N_G$ the set of traces and the set of norms in $G$, respectively; that is, 
$$
\begin{aligned}
	T_G=\{x+x^\star\;|\; x\in G\} \text{ and } N_G=\{xx^\star\;|\; x\in G\}.
\end{aligned}
$$
In \cite{bien-hai-hue}, instead of considering the two sets $T_A$ and  $N_A$, the authors have considered much smaller subsets $T_{G}$ and $N_{G}$, where $G$ is a non-central normal subgroup of $A^\times$. The main results of \cite{bien-hai-hue} demonstrates that the sets $T_G$ and $N_G$ are too big in the whole $A$; that is, if we impose suitable conditions on $T_G\cup N_G$, then the algebraic structure of the whole ring $A$ is effected. More precisely, it was shown that if $A$ is a division ring and $T_G\cup N_G$ is commutative, then $A$ is a quaternion division algebra over $Z(A)$ and $\star$ is of the symplectic type (see \cite[Theorem 5.5]{bien-hai-hue}). Moreover, if $A$ is a simple artinian ring and $T_G\cup N_G$ is contained in $Z(A)$, then either $A$ is a quaternion division algebra over $Z(A)$ or $A$ is isomorphic to the matrix ring ${\rm M}_2(Z(A))$ and $\star$ is of the symplectic type (see \cite[Theorem 6.1]{bien-hai-hue}). In this paper, we investigate a more general setting: the ring $A$ is assumed to be an \textit{almost locally simple artinian algebra} and $G$ is assumed to be an \textit{subnormal subgroup} of $A^\times$. It turns out that in this case subnormal subgroups of $A^\times$ also strongly reflect the multiplicative structure of $A^\times$. The typical result of this kind is Theorem \ref{theorem_subnormal skew linear group} showing how ``big" subnormal subgroups are in $A^\times$.

The paper is organized as follows. Apart from the Introduction, the paper is divided into 4 sections. In Section 2, we recall the definitions of some classes of division rings including the classes of centrally finite, locally finite, weakly locally finite division rings. We also introduce the notion of \textit{almost locally simple artinian algebra} which is the main object of the current paper. Some relationships among these classes of rings are also presented in the chart at the end of this section. This chart shows that the class of almost locally simple artinian algebras is too large and contains many important classes of algebras. In Section 3, we give some criterion for the restriction of an involution of a ring to a subring to be still an involution. Although this section is made for further use, it is interesting in it own right. Section 4 is devoted to the study of the matrix ring ${\rm M}_n(D)$  over a division ring $D$ with involution. The main result of this section is Theorem \ref{theorem_subnormal skew linear group} in which we give some equivalent conditions on the sets of traces $T_G$ and norms $N_G$, where $G$ is a non-central subnormal subgroup of ${\rm GL}_n(D)$, and these conditions, in some sense, are very strong implying a special construction for the whole ${\rm M}_n(D)$. It can be seen that Theorem \ref{theorem_subnormal skew linear group} broadly extends \cite[Theorem 6.1]{bien-hai-hue}.
In Section~5, we try to extend the results obtained in Section~4 for the class of almost locally simple artinian algebras. The primary result of this section is Theorem \ref{theorem_main-locally simple artin}. Finally, we include in this paper Corollaries \ref{corollary_subnormal skew linear group} and \ref{corollary_locally simple artin}, which generalizes some results of Chacron in the papers \cite{Chacron.2016a.paper}-\cite{Chacron.2017b.paper}.
Throughout this paper, for a ring $A$, we denote by $A^\times$ and $Z(A)$ respectively the multiplicative group and the center of $A$. Let $S$ be a subset of $A$. We say that $S$ is \textit{central} if $S\subseteq Z(A)$. In particular, an element $x\in A$ is said to be \textit{central} if $x\in Z(A)$. Also, we note  the following symbols and notations we use frequently in this paper:
\begin{enumerate}
	\item $\overline{S}$ is the subring of $A$ generated by $S$.
	\item  $S^\star=\{s^\star|s\in S\}$ if $\star$ is an involution on $A$.
	\item $C_A(S)=\{a\in A\mid as=sa \text{ for any } s\in S\}$ is the centralizer of $S$ in $A$.
\end{enumerate}
    If $A=D$ is a division ring, and $K$ is a division subring of $D$, then $K[S]$ and $K(S)$ are the subring and the division subring of $D$ generated by the set  $S\cup K$, respectively. We say that $K[S]$  and $K(S)$ are the \textit{subring} and the \textit{division subring}  of $D$ generated by $S$ over $K$, respectively. Finally, $K$ is said to be \textit{$S$-invariant} if $s^{-1}Ks\subseteq K$ for any $0\ne s\in S$. 

\section{Some classes of algebras}
	It can be seen from what we have discussed above, the aim of this paper is to introduce new class of simple algebras over fields,  and then investigate their structure when they are equipped an involution. 	
	\subsection{Weakly locally finite division rings}
	We begin with recalling some classes of division rings. A division ring is called \textit{centrally finite} if it is a finite-dimensional vector space over its center. Otherwise, it is called \textit{centrally infinite}. Centrally finite division rings naturally appear in many contexts. For example, the well-known Weddernburn-Artin Theorem says that every central simple algebra is isomorphic to the full matrix ring over a centrally finite division ring. At the other extreme, centrally infinite division rings are subdivided into some types. Let $D$ be a division ring with center $F$. Viewing $D$ as a vector space over $F$, we use the notation $[D:F]$ to indicate the dimension of this vector space. We say that $D$ is \textit{locally finite} if for every finite subset $S$ of $D$, the division subring $F(S)$ generated by $S$ over $F$ in $D$ is a finite dimensional vector space over $F$, that is, if $[F(S):F]<\infty$.  For a single element $a\in D$, it can be seen that $F(a)$ is then a subfield of $D$ containing $F$. Accordingly, an element $a\in D$ is said to be \textit{algebraic} over $F$ if there exists a non-zero polynomial of the polynomial ring $F[t]$ having $a$ as its root or equivalently, if  $F(a)$ is a finite extension over $F$, that is, if  $[F(a):F]<\infty$. If every element $a\in D$ is algebraic over $F$, then we say that $D$ is \textit{algebraic} over $F$ (briefly, $D$ is \textit{algebraic}). Clearly, every locally finite division ring is algebraic. However, it has been unknown that whether the converse statement holds or not. The problem of finding a suitable algebraic division ring which is not locally finite remains unsolved until now and it is often referred to as the Kurosh Problem for division rings \cite[Problem~ K]{Pa_Kurosh_1941}. Recall that a division ring $D$ is said to be \textit{weakly locally finite} if for every finite subset $S$ of $D$, the division subring $\mathbb{P}(S)$ of $D$ generated by $S$ over the prime subfield $\mathbb{P}$ of $D$ is centrally finite (see  \cite{Pa_dbh_2012}). The following lemma provides an important property of  weakly locally finite division rings.
	
	\begin{lemma}\label{lemma_property of weakly}
		Let $D$ be a division ring with center $F$, and $\mathbb{P}$ the prime subfield of $D$. Then, $D$ is weakly locally finite if and only if $K(S)$ is centrally finite for every finite subset $S$ and every subfield $K$ of $D$ such that $\mathbb{P}\subseteq K\subseteq F$.
	\end{lemma}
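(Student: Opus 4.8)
The $\Leftarrow$ direction will be immediate: since the prime subfield satisfies $\mathbb{P}\subseteq\mathbb{P}\subseteq F$, specializing the right-hand condition to $K=\mathbb{P}$ recovers exactly the defining property that $\mathbb{P}(S)$ is centrally finite for every finite $S$. So the plan is to concentrate on the $\Rightarrow$ direction, and the idea is to realize $K(S)$ as a base change of the ring $\mathbb{P}(S)$, which is centrally finite by hypothesis.

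Fix a finite subset $S$ and a subfield $K$ with $\mathbb{P}\subseteq K\subseteq F$. First I would set $D_0=\mathbb{P}(S)$, which by hypothesis is centrally finite; writing $F_0=Z(D_0)$, the ring $D_0$ is then a finite-dimensional central simple algebra over $F_0$, and since $\mathbb{P}\subseteq K$ the division subring $K(S)$ is generated by $K\cup D_0$. The next step is to manufacture a central subfield of $K(S)$ large enough to see $K$: because $K\subseteq F=Z(D)$ commutes with everything and $F_0=Z(D_0)$ commutes with $D_0$, the set $K\cup F_0$ commutes pairwise, so the division subring $L=K(F_0)$ it generates is a field, it commutes with $D_0$, and it satisfies $F_0\subseteq L\subseteq Z(K(S))$.

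The crux of the argument, and the step I expect to carry the most weight, is to identify $K(S)$ with the base change $D_0\otimes_{F_0}L$. Since $L$ commutes with $D_0$ and contains $F_0$, the assignment $d\otimes\ell\mapsto d\ell$ should define an $F_0$-algebra homomorphism $\varphi\colon D_0\otimes_{F_0}L\to D$ with image the subring $D_0L$. Here I would invoke the standard fact that the base change of the central simple $F_0$-algebra $D_0$ along the field extension $L/F_0$ is central simple over $L$, hence simple; since $\varphi\neq 0$, it must be injective, so $D_0L\cong D_0\otimes_{F_0}L$ is a simple $L$-algebra of finite dimension $[D_0:F_0]$. Being a subring of the division ring $D$ it is a domain, and a finite-dimensional domain over a field is a division ring; as $D_0L$ is a division ring containing $K$ and $D_0$, it must equal $K(S)$.

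Putting the pieces together, $K(S)=D_0L$ will be a division ring with $L\subseteq Z(K(S))$ and $[K(S):L]=[D_0:F_0]<\infty$, so $[K(S):Z(K(S))]\le[K(S):L]<\infty$ and $K(S)$ is centrally finite. The main obstacle I anticipate is purely one of bookkeeping: $F_0=Z(D_0)$ need not lie in the center $F$ of $D$, so one cannot work naively inside $D$ and must instead construct the auxiliary field $L$ and verify the commutation $L\,D_0=D_0\,L$ before the central-simple-algebra base change can be applied.
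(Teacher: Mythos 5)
Your proposal is correct and follows essentially the same route as the paper: both realize $K(S)$ as the image of the base change $\mathbb{P}(S)\otimes_{F_0}L$ of the centrally finite ring $\mathbb{P}(S)$ along a commuting field $L$, use simplicity of this central simple $L$-algebra to get injectivity, and conclude that the image is a finite-dimensional division ring over the central subfield $L$ that must equal $K(S)$. The only (cosmetic) difference is your choice of $L=K(F_0)$, the field generated by $K$ and $Z(\mathbb{P}(S))$, where the paper takes $L=C_{K(S)}(\mathbb{P}(S))$ --- which, as $K$ is central in $D$, coincides with $Z(K(S))$ and contains your $L$, so both choices support the identical tensor-product argument.
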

	
	\begin{proof}
		We begin by assuming that $D$ is weakly locally finite. We wish to show that $D_2=K(S)$ is a centrally finite division ring for every finite subset $S$ of $D$ and every central subfield $K$ of $D$ containing the prime subfield $\mathbb{P}$. If we set $D_1=\mathbb{P}(S)$, then this is a division subring of $ D_2$. Let $F_1$ and $F_2$ be the center of $D_1$ and $D_2$, respectively. One may simply check that $L:=C_{D_2}(D_1)$ is a subfield of $D_2$ containing $F_1$ and that $F_1=D_1\cap L$. If we put $R=D_1\otimes_{F_1}L$, then we have $Z(R)=F_1\otimes_{F_1}L=L$. By virtue of our hypothesis, we conclude that $D_1$ is finite dimensional over $F_1$, and so $n=[D_1:F_1]$ is finite. It follows immediately that 
		$$R\leq D_1\otimes_{F_1}D_1^{op}\otimes_{F_1}L\cong {\rm M}_n(F_1)\otimes_{F_1}L.$$ 
		Then, \cite[\S 7, Corollary 6]{Bo_Draxl_1983} implies that $[R:L]\leq [{\rm M}_n(F_1)\otimes_{F_1}L:L]<\infty$.
		
		Next, we show that $R=D_2$. Indeed, observe that the map $D_1\times L\to D_2$ defined by  $(x,y)\mapsto xy$, for any $(x,y)\in D_1\times L$, is clearly  $F_1$-bilinear. Accordingly, the map $f:R\to D_2: x\otimes y\mapsto xy$ is well defined by the universal property of tensor product. Even more, $f$ is actually a non-zero ring homomorphism. The simplicity of $R$ assures us to conclude that $f$ is injective. This forces the domain $R$ to be finite dimensional over its center $L$ and, hence, $R$ must be a division ring. Since $R$ contains both $K$ and $S$, it follows that $R=D_2$, and so $L=F_2$. In other words, $D_2$ is a centrally finite division ring, which proves the ``only if'' part. The converse of the proposition is fairly obvious.
	\end{proof}
	
	In view of Lemma \ref{lemma_property of weakly}, it is easy to see that every locally finite division ring is weakly locally finite. Indeed, assume that $D$ is a locally finite division ring with center $F$, $K$ is a subfield of $F$ containing the prime subfield $\mathbb{P}$, and $S$ is a finite subset of $D$. Note that any division subring of a centrally finite division ring is also centrally finite (see \cite[Theorem 3]{Pa_hn_2013}). Now, since $[F(S):F]<\infty$, it follows that $F(S)$ is  centrally finite, and so is $K(S)$.  
	
	Examples of weakly locally finite division rings which are not algebraic (so are not locally finite)  were provided in \cite{Deo.Bien.Hai.2019.paper}, and we refer  to this paper for more details.  In \cite[Theorem 1]{Deo.Bien.Hai.2019.paper}, it was proved that a division ring is weakly locally finite if and only if it is locally PI. In view of this theorem, it is appropriate to mention the examples presented in \cite{Pre_Wad}. 
	\subsection{Almost locally simple PI algebras}
	Because a centrally finite division ring is certainly a central simple algebra, the observation just made says that a weakly locally finite division ring has a further property that every its finite subset generates over the center a central simple subalgebra (finite-dimensional). This motivates us to introduce the new notion of \textit{almost locally central simple algebra}. Before stating the definition, let us make some observation. Let $A$ be an algebraic structure (e.g. a group, a ring, etc.), and $\mathcal{P}$,   be some algebraic property. As widely accepted tradition, one says that $A$ is \textit{locally $\mathcal{P}$} if every finitely generated substructure of $A$ satisfies the property $\mathcal{P}$. To avoid the confusion, we will say that $A$ is \textit{almost locally $\mathcal{P}$} if every finitely generated substructure of $A$ is contained in some substructure of $A$ which satisfies the property $\mathcal{P}$. The definitions also suggest that ``being almost locally $\mathcal{P}$ is much weaker than ``being  locally $\mathcal{P}$''.
	
	Now, let $K$ be a field. Recall that a $K$-algebra $A$ is called a \textit{central simple $K$-algebra} if $Z(A)=K$ and $A$ is a simple ring (see \cite{Bo_Herstein_1971}).  Note that in \cite{Bo_Draxl_1983}, the expression ``a central simple $K$-algebra" is used to indicate a $K$-algebra $A$, where $A$ is a simple algebra having $K$ as its center, and $[A:K]<\infty$. In \cite{Bo_Herstein_1971}, such an algebra is said to be a \textit{finite-dimensional central simple algebra}. It is well-known that an algebra $A$ over a field $K$ is finite-dimensional central simple if and only if $A$ is isomorphic as a $K$-algebra to the matrix ring  $\mathrm{M}_n(D)$ for some positive integer $n$ and some centrally finite division $D$ with center $K$. It is important to note that in this paper, we \textit{always follow} \cite{Bo_Herstein_1971} for the definition of central simple $K$-algebras.
	
	If a simple $K$-algebra $A$ satisfies a polynomial identity, then we say that $A$ is a \textit{simple PI-algebra} over $K$. Observe that if $A$ is a finite-dimensional central simple $K$-algebra, then $A$ satisfies a polynomial identity. Indeed, since $A\cong \mathrm{M}_n(D)$, where $D$ is a centrally finite division ring, so if we put $m=[D:Z(D)]$, then  $A$ is embedded in $\mathrm{M}_{nm}(Z(D))$. Hence, according to Amitsur-Levitzki's Theorem (see \cite[Theorem~ 1]{amitsur-levitzki}), $A$ satisfies the standard polynomial $s_{2nm}$. Conversely, assume that $A$ is a simple PI-algebra over a field $K$. By \cite[Theorem~ 1]{Pa_Kaplansky_1948}, $[A:Z(A)]<\infty$, which implies that $A$ is artinian. By the Wedderburn-Artin  Theorem, we have $A\cong \mathrm{M}_n(D)$ for some positive integer $n$ and a division ring $D$. Since $Z(A)=Z(D)$ and $[A:Z(A)]<\infty$, it follows that $D$ is centrally finite. The discussion above shows that the following proposition holds.
	\begin{proposition}\label{simple PI}
		An algebra over a field  is finite-dimensional central simple   if and only if it is simple PI.
	\end{proposition}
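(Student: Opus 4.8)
The plan is to prove the two implications separately, each time reducing to the Wedderburn--Artin structure theorem and then invoking one classical theorem on polynomial identities. I would first fix the convention that ``finite-dimensional central simple'' is understood over the center $Z(A)$, so that in both directions the relevant base field is $Z(A)$; the only care needed throughout is to keep the coefficient field $K$ and the center $Z(A)$ separate, since ``simple PI over $K$'' only requires $K\subseteq Z(A)$.

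For the forward direction, suppose $A$ is finite-dimensional central simple, so $[A:Z(A)]<\infty$ and $A$ is simple. By Wedderburn--Artin I would write $A\cong \mathrm{M}_n(D)$ with $D$ a division ring whose center is $Z(A)$, and $D$ centrally finite; setting $m=[D:Z(D)]$, the left regular representation of $D$ on itself as a $Z(D)$-vector space embeds $D$ into $\mathrm{M}_m(Z(D))$, whence $A$ embeds into $\mathrm{M}_{nm}(Z(A))$. Amitsur--Levitzki's theorem then guarantees that $\mathrm{M}_{nm}(Z(A))$ satisfies the standard identity $s_{2nm}$, and a subalgebra inherits any polynomial identity, so $A$ satisfies $s_{2nm}$ as well. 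Thus $A$ is a simple algebra satisfying a polynomial identity, i.e.\ simple PI.

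For the converse, suppose $A$ is a simple $K$-algebra satisfying a polynomial identity. The key input is Kaplansky's theorem, which yields $[A:Z(A)]<\infty$; in particular $A$ is (left) artinian, so Wedderburn--Artin again gives $A\cong \mathrm{M}_n(D)$ for some positive integer $n$ and some division ring $D$. Since $Z(A)=Z(\mathrm{M}_n(D))=Z(D)$ and $[A:Z(A)]<\infty$, it follows that $[D:Z(D)]<\infty$, i.e.\ $D$ is centrally finite. Hence $A\cong \mathrm{M}_n(D)$ with $D$ centrally finite and $Z(A)=Z(D)$, which is exactly a finite-dimensional central simple algebra over its center.

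I do not expect a genuine obstacle here: both directions are short once the two cited theorems (Amitsur--Levitzki and Kaplansky) are available, and the estimate $[A:Z(A)]<\infty$ carries essentially the whole content of the converse. The only point requiring attention is the bookkeeping of fields: in the converse one must note that finite-dimensionality is obtained over $Z(A)$ rather than over the possibly smaller coefficient field $K$, and that simplicity is preserved under the structure isomorphism, so that the resulting object is central simple precisely over $Z(A)$.
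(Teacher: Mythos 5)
Your proof is correct and takes essentially the same route as the paper: for the forward direction, Wedderburn--Artin followed by the regular-representation embedding of $A\cong \mathrm{M}_n(D)$ into $\mathrm{M}_{nm}(Z(D))$ with $m=[D:Z(D)]$ and then Amitsur--Levitzki's standard identity $s_{2nm}$; for the converse, Kaplansky's theorem to get $[A:Z(A)]<\infty$, then Wedderburn--Artin and $Z(A)=Z(D)$ to conclude $D$ is centrally finite. Your explicit bookkeeping separating the coefficient field $K$ from the center $Z(A)$ is a point the paper leaves implicit, but the substance of the argument is identical.
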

	Now, we are ready to give the following definitions.
	\begin{definition}\label{almost locally central simple}
		An algebra $A$ over a field $K$ is said to be \textit{almost locally (finite-dimensional) central simple} if every finitely generated $K$-subalgebra of $A$ is contained in a (finite-dimensional) central simple algebra over some field. 
	\end{definition}

	It is obvious that a (finite-dimensional) central simple algebra over a field $K$ is an almost locally (finite-dimensional) central simple algebra over $K$. We provide also some non-trivial examples of an almost locally finite-dimensional central simple algebra.
	\begin{proposition}\label{example_locally central}
		Let $D$ be a weakly locally finite division ring with center $K$ and $n$ a positive integer. Then, the matrix ring ${\rm M}_n(D)$ is an almost locally finite-dimensional central simple algebra over $K$.
	\end{proposition}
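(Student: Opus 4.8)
The plan is to verify Definition \ref{almost locally central simple} directly: I will show that every finitely generated $K$-subalgebra $R$ of ${\rm M}_n(D)$ is contained in a finite-dimensional central simple algebra, and the natural candidate is ${\rm M}_n(D')$ for a suitably chosen centrally finite division subring $D'\subseteq D$. The whole argument rests on Lemma \ref{lemma_property of weakly}, which is exactly the tool that produces such a $D'$.

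First I would write $R=K[a_1,\dots,a_m]$ for finitely many matrices $a_1,\dots,a_m\in{\rm M}_n(D)$, and let $S$ denote the finite set consisting of all the entries of these generators. Since every element of $R$ is a $K$-linear combination of products of the $a_i$, each of its entries is obtained by sums and products of elements of $S$ together with scalars from $K$; hence every entry lies in the subring of $D$ generated by $S$ over $K$, and a fortiori in the division subring $K(S)$. This gives the containment $R\subseteq{\rm M}_n\bigl(K(S)\bigr)$ inside ${\rm M}_n(D)$.

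Next I set $D'=K(S)$ and invoke Lemma \ref{lemma_property of weakly} with the central subfield taken to be $F=Z(D)=K$ itself (so that the hypothesis $\mathbb{P}\subseteq K\subseteq F$ holds with equality on the right): because $D$ is weakly locally finite and $S$ is finite, $D'=K(S)$ is centrally finite. Writing $K'=Z(D')$, the finiteness $[D':K']<\infty$ together with the simplicity of a matrix ring over a division ring shows that ${\rm M}_n(D')$ is a simple ring whose center is the scalar copy $K'\cdot I\cong K'$ and which satisfies $[{\rm M}_n(D'):K']=n^2[D':K']<\infty$. Thus ${\rm M}_n(D')$ is a finite-dimensional central simple algebra over the field $K'$, and since $R\subseteq{\rm M}_n(D')$ the required property follows.

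The argument is essentially routine once Lemma \ref{lemma_property of weakly} is in hand; I do not anticipate a genuine obstacle, only two points that need care. The first is the bookkeeping in the entry-tracking step: one must be sure that passing from the abstractly presented subalgebra $R$ to the concrete matrix ring ${\rm M}_n(K(S))$ really captures \emph{all} entries of \emph{all} elements of $R$, not merely of the generators. The second is conceptual rather than technical: the ambient central simple algebra is over the field $K'=Z(D')$, which need not equal the original center $K$, and it is precisely the phrase ``central simple algebra over some field'' in Definition \ref{almost locally central simple} that makes this permissible.
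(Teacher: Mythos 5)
Your proof is correct and takes essentially the same route as the paper's: track the entries of the finitely many generating matrices, pass to the division subring of $D$ they generate, invoke weak local finiteness to make it centrally finite, and enclose the subalgebra in the corresponding matrix ring, which is finite-dimensional central simple over the (possibly new) center. If anything, your version is slightly more careful: the paper takes $E$ to be the division subring generated by the entries alone, so the containment of the $K$-subalgebra in ${\rm M}_n(E)$ tacitly requires $K\subseteq E$, whereas your choice $D'=K(S)$, justified through Lemma \ref{lemma_property of weakly} with the central subfield equal to $K$ itself, makes that containment automatic.
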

	\begin{proof}
		For any positive integer $k$, let $A_1, A_2,\dots, A_k$ be matrices in ${\rm M}_n(D)$, and $A$ be the $F$-algebra of ${\rm M}_n(D)$ generated by these matrices. Let $E$ be the division subring of $D$ generated by all entries of all matrices $A_1, A_2,\dots, A_k$. Since $D$ is weakly locally finite, $E$ is finite dimensional over its center, say $K$. Then, $A$ is contained in   ${\rm M}_n(E)$ which is a finite-dimensional central simple algebra over $K$. 
	\end{proof}
		
	\begin{definition}  
		An algebra $A$ over a field $K$ is called an \textit{almost locally simple PI-algebra} if every finitely generated $K$-subalgebra of $A$ is contained in a simple PI-algebra over some field. 
	\end{definition}

	\begin{theorem}\label{almost locally PI}
		An algebra $A$ over a field $K$ is an almost locally simple PI-algebra    if and only if $A$ is an almost locally finite-dimensional central simple algebra.
	\end{theorem}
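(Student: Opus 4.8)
The plan is to deduce the equivalence directly from Proposition~\ref{simple PI}, which identifies the class of finite-dimensional central simple algebras with the class of simple PI-algebras. The two ``almost locally'' properties are defined by requiring every finitely generated $K$-subalgebra of $A$ to embed into a member of one of these two classes; since Proposition~\ref{simple PI} shows the classes coincide as classes of rings, the result should follow with very little additional work. I therefore expect the proof to be short, with the substance already contained in Proposition~\ref{simple PI}.

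First I would unwind the definitions and fix an arbitrary finitely generated $K$-subalgebra $B$ of $A$. For the implication that being almost locally simple PI forces being almost locally finite-dimensional central simple, I would take a simple PI-algebra $C$ containing $B$ (which exists by hypothesis) and apply Proposition~\ref{simple PI} to $C$ itself. This yields that $C$ is a finite-dimensional central simple algebra over its center $Z(C)$; hence $B$ lies in a finite-dimensional central simple algebra over the field $Z(C)$, which is exactly what the target property demands. The reverse implication is symmetric: given a finite-dimensional central simple algebra $C$ containing $B$, Proposition~\ref{simple PI} guarantees that $C$ is simple PI, so $B$ sits inside a simple PI-algebra over a field. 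Since $B$ is arbitrary in both cases, the two ``almost locally'' conditions on $A$ are equivalent.

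The only point requiring care is the bookkeeping of base fields. In each definition the containing algebra is allowed to be ``over some field'' that need not equal $K$, so when I invoke Proposition~\ref{simple PI} on the containing algebra $C$ I must ensure the proposition's conclusion is phrased over the correct field. Here the key observation is that simplicity of $C$ forces its center $Z(C)$ to be a field: any nonzero central element generates the whole ring as an ideal, hence is invertible, and its inverse is again central. Consequently Proposition~\ref{simple PI} produces a genuine finite-dimensional central simple algebra over the field $Z(C)$, matching precisely the form required by the definition of almost locally finite-dimensional central simple. This is the main (and essentially the only) obstacle, and it is resolved by this elementary observation rather than by any substantive computation.
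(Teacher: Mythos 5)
Your proof is correct and follows essentially the same route as the paper's: fix an arbitrary finitely generated $K$-subalgebra $B$, and apply Proposition~\ref{simple PI} to the containing algebra $C$ in each direction. Your extra remark that simplicity of $C$ forces $Z(C)$ to be a field (so the conclusion is correctly phrased ``over some field'') is a point the paper leaves implicit, but it does not change the argument.
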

	\begin{proof}
		Assume that $A$ is an almost locally simple PI-algebra over a field $K$. Let $B$ be a finitely generated $K$-subalgebra of $A$. Then, $B$ is contained in a simple PI-algebra, say $C$. By Proposition \ref{simple PI}, $C$ is a finite-dimensional central simple algebra. Hence, we conclude that $A$ is an almost locally finite-dimensional central simple algebra. 
		
		Conversely, assume that $A$ is an almost locally finite-dimensional central simple algebra over a field $K$, and $B$ is a finitely generated $K$-subalgebra of $A$. Then, $B$ is contained in some finite-dimensional central algebra $C$, which is a simple PI-algebra in view of Proposition \ref{simple PI}. Therefore, $A$ is an almost locally simple PI-algebra. 
	\end{proof} 
	\subsection{Locally central simple and finite dimensional algebras}
	In \cite{bar-on-gilat-matzri-vishne}, T. Bar-On, S. Gilat, E. Matzri and U. Vishne introduced the concept of a locally finite central simple and finite dimensional algebra. In what follows, we will make some observations to compare the class of such algebras with those of algebras we just introduced above. For the convenience of readers, we present the definition of a locally finite central simple and finite dimensional algebra here.
	\begin{definition}[{\cite[Definition 3.1]{bar-on-gilat-matzri-vishne}}]\label{definition_C_F}
		An algebra over a field $K$ is \textit{locally central simple and finite dimensional} if every finitely generated $K$-subalgebra is contained in a central simple algebra which is finite dimensional over $K$.  The class of such algebras is denoted by $\mathcal{C}_K$.
	\end{definition}
	
	In the same paper, the authors also provided a systematic study of such algebras. In particular, they proved in \cite[Corollary 3.2]{bar-on-gilat-matzri-vishne} that a $K$-algebra $A$ is locally finite central simple and finite dimensional if and only if it is a direct limit of finite dimensional $K$-algebras, giving an important characteristic of such algebras. It is clear from Definition \ref{definition_C_F} that the class of almost locally finite-dimensional central simple algebra over a field $K$ contains the class $\mathcal{C}_K$. Moreover, we have the following result.  
	\begin{proposition}\label{proposition_subclass}
		The class $\mathcal{C}_K$ is properly contained in the class of almost locally finite-dimensional central simple algebras over $K$.
	\end{proposition}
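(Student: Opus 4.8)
The plan is to establish proper containment by exhibiting a single algebra lying in the larger class but not in $\mathcal{C}_K$. The inclusion of $\mathcal{C}_K$ inside the class of almost locally finite-dimensional central simple algebras over $K$ is immediate from the definitions, as already noted in the text: if every finitely generated $K$-subalgebra sits inside a central simple algebra finite-dimensional over $K$, then a fortiori it sits inside a finite-dimensional central simple algebra over \emph{some} field, namely $K$ itself. So the real content is the existence of a witness to strictness.

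For the witness I would take a weakly locally finite division ring $D$ with center $K$ that is \emph{not} algebraic over $K$; such division rings exist by \cite{Deo.Bien.Hai.2019.paper}. Applying Proposition \ref{example_locally central} with $n=1$ shows that $D={\rm M}_1(D)$ is an almost locally finite-dimensional central simple algebra over $K$. It then remains only to check that $D\notin\mathcal{C}_K$.

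The key is the contrast between the two notions: membership in $\mathcal{C}_K$ forces the enveloping central simple algebra to be finite-dimensional over $K$ \emph{itself}, whereas the weaker notion merely requires finite-dimensionality over some possibly larger field. Concretely, since $D$ is not algebraic over $K$, there is an element $a\in D$ that is transcendental over $K$, so the $K$-subalgebra $K[a]$ is isomorphic to the polynomial ring $K[t]$ and hence is infinite-dimensional over $K$. Any $K$-subalgebra of an algebra that is finite-dimensional over $K$ is itself finite-dimensional over $K$; therefore the finitely generated subalgebra $K[a]$ cannot be contained in any central simple algebra finite-dimensional over $K$. This yields $D\notin\mathcal{C}_K$ and completes the argument for proper containment.

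I do not anticipate a serious obstacle here: the proof reduces to the distinction between ``finite-dimensional over $K$'' and ``finite-dimensional over a larger field,'' and the non-algebraic weakly locally finite examples of \cite{Deo.Bien.Hai.2019.paper} supply exactly the transcendental element needed. The only points requiring minor care are verifying that $D$ genuinely lies in the larger class, which is handled by Proposition \ref{example_locally central}, and that the transcendental element indeed produces an infinite-dimensional finitely generated $K$-subalgebra, which is clear since $K[a]\cong K[t]$.
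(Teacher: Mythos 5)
Your proposal is correct and follows essentially the same route as the paper: both exhibit a weakly locally finite division ring that is not locally finite (the non-algebraic examples of the cited reference) as a witness lying in the larger class, via Proposition \ref{example_locally central}, but outside $\mathcal{C}_K$. You merely spell out what the paper leaves implicit --- that a transcendental element $a$ gives a finitely generated subalgebra $K[a]\cong K[t]$ of infinite $K$-dimension, which cannot embed in any algebra finite-dimensional over $K$ --- and your choice of a non-algebraic (rather than just non-locally-finite) witness is exactly what makes that detail work.
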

	\begin{proof}
		 Observe that there exists a weakly locally finite division ring which is not locally finite as noted above. So, such a division ring is almost locally finite-dimensional central simple but it does not belong to $\mathcal{C}_K$.
	\end{proof}
	\subsection{Almost locally simple artinian $K$-algebras} By definition, an algebra over a field $K$ is \textit{almost locally simple artinian} if every its finitely generated $K$-subalgebra is contained in a simple artinian algebra over some field. The class of such algebras contains the class of almost locally finite-dimensional central simple algebras. In view of Proposition~ \ref{example_locally central} and Proposition \ref{proposition_subclass}, we see that the last class appears very naturally and it is very large. This class indeed contains the important class of central simple finite-dimensional algebras and the last class contains the class $\mathcal{C}_K$ which was introduced and studied in \cite{bar-on-gilat-matzri-vishne}.
	Now, we summarise the relationships among theses classes of algebras in the following chart.
	
	\bigskip
	
			\begin{center}
			% defines
			\tikzset{ 
				empty/.style ={draw, white}}
			\tikzstyle{block} = [rectangle, draw, 
			text width=5em, text centered, rounded corners, minimum height=4em]
			\tikzstyle{line} = [draw, -latex']
			
			\begin{tikzpicture}[node distance = 2cm, auto]
				
				%blocks
				\node [block] (A) {f-d central simple};
				\node[empty] (O) [below of=A] {};
				\node [block, left of=O, node distance=3cm] (B) {locally finite central simple and f-d};
				\node [block, right of=O, node distance=3cm] (C) {almost locally f-d central simple};
				\node [block, below of=O] (D) {almost locally simple PI};
				\node [block, below of=D, node distance=3cm] (E) {almost locally simple artinian};
				
				% lines
				\path [line] (A) -| (B);
				\path [line] (A) -| (C);
				\path [line] (B) -- (C);
				\path [line] (B) |- (D);
				\path [line] (C) |- (D);
				\path [line] (D) -| (C);
				\path [line] (D) -- (E);
			\end{tikzpicture}
			
			\bigskip 
			\textbf{Important:} Note that in this chart f-d means finite-dimensional.	
		\end{center}
	\section{Involutions on rings and their restrictions}

	Let $A$ be a ring with involution $\star$ and $B$ a subring of $A$. Then, the restriction of $\star$ to $B$ not need to be an involution on $B$. However, in some situation, this is the case. For further use, in this section, we give some conditions on $\star$ and $B$ for which the restriction of $\star$ to $B$ is still an involution on $B$. Firstly, let us record two useful  remarks which recall the constructions of a subring and a division subring of a division ring generated by a subset. 
	\begin{remark}\label{remark_subring}
		Let $A$ be a ring and $S$ be a non-empty subset of $A$. Then, it is elementary to see that each element of the subring $\overline{S}$  of $A$ generated by $S$ can be written in a finite sum of monomials in elements taken from $S\cup\{1\}$; that is,
		\[
		\overline{S}=\left\{\sum\limits_{i=1}^n k_ia_{i1}a_{i2}\cdots a_{ir_i}|k_i\in \mathbb{Z}, a_{ij}\in S\cup\{1\}, n\in\mathbb{N}\right\}.
		\]
	\end{remark}
	
	\begin{remark}\label{remark_division subring}
		Let $D$ be a division ring with the prime subfield $\mathbb{P}$, and $S$ a non-empty subset of $D$. Then, the construction of the division subring $\mathbb{P}(S)$ of $D$ generated by $S$ over $\mathbb{P}$ is given as follows. Let $\overline{S}$ be the subring of $D$ generated by $S$. Define $L_0=\overline{S}$ and $L_0^{-1}=\{x^{-1}\mid x\in L_0\setminus \{0\}\}$. Inductively, we also define the following subsets of $D$:
		\[
		L_k=\overline{L_{k-1}\cup L^{-1}_{k-1}} \;\;\;\text{and}\;\;\;L^{-1}_k=\{x^{-1}\mid x\in L_k\setminus \{0\}\},
		\]
		for any $k\geq1$. Clearly, $L_0\subseteq L_1\subseteq L_2\subseteq \dots$, and it is straightforward to check that $\mathbb{P}(S)=\bigcup\limits_{k=0}^{\infty} L_k$.
	\end{remark}
	
	\begin{proposition}\label{propostion_restrict subring}
		Let $A$ be a ring with involution $\star$, and $S$ be a non-empty subset of $A$ such that $S^\star\subseteq S$. Then, the restriction of $\star$ to the subring $\overline{S}$ is an involution on $\overline{S}$.
	\end{proposition}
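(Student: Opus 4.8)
The plan is to reduce the statement to a single closure property: it suffices to show that the subring $\overline{S}$ is $\star$-invariant, i.e. $\overline{S}^\star = \overline{S}$. Once this is established, the map $\star|_{\overline{S}}$ is a well-defined function $\overline{S}\to\overline{S}$, and the three defining axioms (a)--(c) of an involution, being satisfied by $\star$ on all of $A$, hold in particular for elements of $\overline{S}$. Thus the restriction is automatically an involution on $\overline{S}$, and the entire content of the proposition lies in verifying invariance.

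To verify that $\overline{S}^\star\subseteq\overline{S}$, I would first record that $1^\star=1$. This follows from the involution axioms: for every $x\in A$ we have $x^\star=(x\cdot 1)^\star=1^\star x^\star$ and, symmetrically, $x^\star=x^\star 1^\star$, so $1^\star$ is a two-sided identity of $A$ and hence equals $1$. Combined with the hypothesis $S^\star\subseteq S$, this gives $(S\cup\{1\})^\star\subseteq S\cup\{1\}$.

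Next I would invoke the explicit description of $\overline{S}$ from Remark \ref{remark_subring}: an arbitrary element of $\overline{S}$ has the form $\sum_{i=1}^n k_i a_{i1}a_{i2}\cdots a_{ir_i}$ with $k_i\in\mathbb{Z}$ and $a_{ij}\in S\cup\{1\}$. Applying $\star$ and using additivity (a), the product-reversing property (b), and the fact that $\star$ commutes with multiplication by integers (an immediate consequence of additivity), one obtains
\[
\left(\sum_{i=1}^n k_i a_{i1}\cdots a_{ir_i}\right)^\star = \sum_{i=1}^n k_i\, a_{ir_i}^\star\cdots a_{i1}^\star.
\]
Since each $a_{ij}^\star$ again lies in $S\cup\{1\}$, every reversed monomial is again a monomial in elements of $S\cup\{1\}$, so the right-hand side has exactly the form described in Remark \ref{remark_subring} and therefore belongs to $\overline{S}$. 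This proves $\overline{S}^\star\subseteq\overline{S}$. Applying $\star$ once more and using $(x^\star)^\star=x$ yields the reverse inclusion $\overline{S}=(\overline{S}^\star)^\star\subseteq\overline{S}^\star$, so $\overline{S}^\star=\overline{S}$, completing the argument.

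I expect no genuine obstacle here; the only points requiring care are the preliminary identity $1^\star=1$ (needed so that the symbol $1$ occurring in the monomials is handled correctly) and the bookkeeping when distributing $\star$ across a sum of reversed monomials. Both are routine consequences of the involution axioms together with the structural description in Remark \ref{remark_subring}.
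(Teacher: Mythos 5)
Your proof is correct and follows essentially the same route as the paper: both reduce the statement to the inclusion $\overline{S}^\star\subseteq\overline{S}$ and verify it by applying $\star$ termwise to the monomial description of $\overline{S}$ from Remark \ref{remark_subring}. You are in fact slightly more careful than the paper, which asserts that the $a_{ij}^\star$ lie in $S$ without noting the case $a_{ij}=1$; your preliminary observation that $1^\star=1$ cleanly handles this.
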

	\begin{proof}
		It is enough to prove that $\overline{S}^\star\subseteq \overline{S}$. For each element $x\in \overline{S}$, it follows from Remark \ref{remark_subring} that
		\[
		x=\sum\limits_{i=1}^n k_ia_{i1}a_{i2}\cdots a_{ir_i},
		\]
		where $k_i\in \mathbb{Z}, a_{ij}\in S\cup\{1\}, n\in\mathbb{N}$. Since $a_{i1}^\star,a_{i2}^\star,\ldots,a_{ir_i}^\star$ are in $S$, it follows that
		\[
		x^\star=\sum\limits_{i=1}^n k_ia_{ir_i}^\star \cdots a_{i2}^\star a_{i1}^\star\in\overline{S}.
		\]
		Hence, $\overline{S}^\star\subseteq \overline{S}$, as desired.
	\end{proof}
	
	\begin{proposition}\label{propostion_restrict division subring}
		Let $D$ be a division ring with an involution $\star$, $S$ a non-empty subset of $D$ such that $S^\star\subseteq S$, and $\mathbb{P}$  the prime subfield of $D$. Then, the restriction of $\star$ to $\mathbb{P}(S)$ is an involution on $\mathbb{P}(S)$.
	\end{proposition}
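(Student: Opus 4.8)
The plan is to reduce everything to the single closure statement $\mathbb{P}(S)^\star\subseteq\mathbb{P}(S)$. Once this is known, the restriction of $\star$ to $\mathbb{P}(S)$ is a well-defined self-map of $\mathbb{P}(S)$, and it automatically inherits the three axioms (a), (b), (c) of an involution from $\star$ on $D$, since these are identities that hold for all elements of $D$ and in particular for all elements of the subring $\mathbb{P}(S)$. So no separate verification of the axioms is needed; the entire content is the invariance $\mathbb{P}(S)^\star\subseteq\mathbb{P}(S)$.

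To prove that invariance, I would exploit the explicit inductive construction of $\mathbb{P}(S)$ recorded in Remark \ref{remark_division subring}, writing $\mathbb{P}(S)=\bigcup_{k=0}^{\infty}L_k$ with $L_0=\overline{S}$ and $L_k=\overline{L_{k-1}\cup L_{k-1}^{-1}}$, and then showing by induction on $k$ that $L_k^\star\subseteq L_k$ for every $k$. The base case $L_0^\star\subseteq L_0$ is immediate from Proposition \ref{propostion_restrict subring}, since by hypothesis $S^\star\subseteq S$. The union of an ascending chain of $\star$-invariant sets is $\star$-invariant, so the desired conclusion $\mathbb{P}(S)^\star\subseteq\mathbb{P}(S)$ follows once the inductive step is in place.

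The inductive step is where the only real work lies, and it hinges on the interaction of $\star$ with inversion. Assuming $L_{k-1}^\star\subseteq L_{k-1}$, I would first establish that $(L_{k-1}^{-1})^\star\subseteq L_{k-1}^{-1}$, using the identity $(x^{-1})^\star=(x^\star)^{-1}$ valid for any nonzero $x\in D$; this identity follows from applying $\star$ to $x^{-1}x=1$ and using that $\star$ reverses products and fixes $1$. Concretely, for nonzero $x\in L_{k-1}$ the element $x^\star$ is again a nonzero element of $L_{k-1}$ by the inductive hypothesis, so $(x^{-1})^\star=(x^\star)^{-1}\in L_{k-1}^{-1}$. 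Consequently the set $T:=L_{k-1}\cup L_{k-1}^{-1}$ satisfies $T^\star\subseteq T$, and a second application of Proposition \ref{propostion_restrict subring} to $T$ yields $L_k^\star=\overline{T}^{\,\star}\subseteq\overline{T}=L_k$, completing the induction. The main obstacle is thus purely bookkeeping: one must check that the operations of forming subrings and of adjoining inverses, iterated as in Remark \ref{remark_division subring}, both preserve $\star$-invariance, and the anti-automorphism identity $(x^{-1})^\star=(x^\star)^{-1}$ is precisely what makes the inversion step compatible with the involution.
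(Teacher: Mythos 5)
Your proposal is correct and follows essentially the same route as the paper: both reduce to the closure $\mathbb{P}(S)^\star\subseteq\mathbb{P}(S)$, induct on the chain $L_k$ from Remark \ref{remark_division subring}, use $(x^{-1})^\star=(x^\star)^{-1}$ to handle inversion, and invoke Proposition \ref{propostion_restrict subring} at each subring-generation step. If anything, your write-up is slightly cleaner than the paper's, which contains a small typo ($(x^{-1})^\star\in L_0^{-1}$ where $L_k^{-1}$ is meant) that your version avoids.
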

	\begin{proof}
		In what follows, we use the notation and construction of $\mathbb{P}(S)$ as in Remark~\ref{remark_division subring}.
		Thus, we have 
		\[
		\mathbb{P}(S)=\bigcup\limits_{k=0}^{\infty} L_{k}.
		\]
		In view of Proposition \ref{propostion_restrict subring}, to prove that the restriction of $\star$ to $\mathbb{P}(S)$ is an involution on $\mathbb{P}(S)$, we have to show that 
		$\mathbb{P}(S)^\star\subseteq \mathbb{P}(S)$.
		Clearly, it suffices to prove that $L_k^\star\subseteq L_k$ for every $k\in\mathbb{N}$. 
		If $k=0$, then $L_0=\overline{S}$, $L_0^{-1}=\{x^{-1}\mid 0\ne x\in \overline{S}\}$, so by hypothesis, it follows that $L_0^\star\subseteq L_0$. Assume by induction that $L_k^\star\subseteq L_k$. We have
		$$L_{k+1}=\overline{L_k\cup L^{-1}_k}.$$
In view of Proposition \ref{propostion_restrict subring}, $L_k$ is a ring with involution $\star$.  So, for any $x\in L_k$, we have
		\[
		(x^{-1})^\star=(x^\star)^{-1}\in L_0^{-1},
		\]
		which implies that $(L_k^{-1})^\star\subseteq L_k^{-1}$, and consequently, 
		$L_k^\star\cup (L_k^{-1})^\star\subseteq L_k\cup L_k^{-1}$ or $(L_k\cup L_k^{-1})^\star\subseteq L_k\cup L_k^{-1}$, from which it follows that $L_{k+1}^\star\subseteq L_{k+1}$. By induction principle, we conclude that $L_k^\star\subseteq L_k$ for every $k\in\mathbb{N}$ as desired.
	\end{proof}
	
	\begin{lemma}\label{lemma_restriction}
		Let $A$ be a ring with center $Z$, $B$ a subring of $A$ containing $Z$, and  $\star$  an involution on $A$.  If $T_B\subseteq Z$, then the restriction of $\star$ to $B$ is also an involution on $B$.
	\end{lemma}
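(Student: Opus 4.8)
The plan is to reduce the entire statement to the single claim that $B$ is stable under $\star$, that is, $B^\star\subseteq B$. Once we know that $\star$ maps $B$ into itself, the restriction $\star|_B\colon B\to B$ is a well-defined self-map, and it automatically inherits the three defining properties of an involution—additivity, the anti-multiplicative rule $(xy)^\star=y^\star x^\star$, and $(x^\star)^\star=x$—directly from the fact that these identities already hold for all elements of $A$, in particular for all elements of $B$. Hence no separate verification of the involution axioms (a)--(c) is required; everything hinges on establishing the inclusion $B^\star\subseteq B$.

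To prove stability, I would fix an arbitrary element $x\in B$ and show $x^\star\in B$ by expressing $x^\star$ in terms of quantities already known to lie in $B$. Since $x\in B$, the trace $x+x^\star$ belongs to $T_B$, and by hypothesis $T_B\subseteq Z$. Because $B$ contains the center $Z$, this gives $x+x^\star\in Z\subseteq B$. Now I would invoke the fact that $B$, being a subring, is closed under subtraction: from the identity
\[
x^\star=(x+x^\star)-x,
\]
with both $x+x^\star\in B$ and $x\in B$, it follows that $x^\star\in B$. As $x$ was arbitrary, we obtain $B^\star\subseteq B$, which completes the reduction described above.

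I do not anticipate any genuine obstacle in this argument, as it uses nothing beyond the subring axioms for $B$ and the definition of $T_B$; in particular it requires no finiteness hypothesis on $A$ nor that $A$ be a division ring. The only point deserving attention is the logical role of the assumption $Z\subseteq B$: it is precisely this containment that guarantees the central trace $x+x^\star$ already lies in $B$ and can therefore be subtracted from $x$ within $B$. Without it, $x+x^\star$ would merely be known to be central in $A$, and the subtraction step would fail to land back inside $B$.
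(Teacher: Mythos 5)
Your proof is correct and follows essentially the same route as the paper: both reduce the statement to the stability $B^\star\subseteq B$ (the paper via Proposition \ref{propostion_restrict subring}, you by noting the involution axioms are inherited from $A$) and then establish it through the identity $x^\star=(x+x^\star)-x$ with $x+x^\star\in T_B\subseteq Z\subseteq B$. Your closing remark on the precise role of the hypothesis $Z\subseteq B$ is a nice touch, but the mathematical content matches the paper's argument exactly.
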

	\begin{proof}
		In view of Proposition \ref{propostion_restrict subring}, it suffices to prove that $b^\star\in B$ for any $b\in B$. Indeed, since $T_B\subseteq Z$, we have $b^\star + b\in Z\subseteq B$. Hence, $b^\star+b=b'$ for some $b'\in B$, which implies that $b^\star=b'-b\in B$. 
	\end{proof}

\section{Skew linear groups with involutions}
	Before proceeding further, let us recall some fundamental facts about finite-dimensional central simple algebras. Let $A$ be a finite-dimensional central simple algebra over a field $F$. Then, by the well-known Wedderburn-Artin Theorem, there is a unique integer $r$ and a  unique up to isomorphism  division ring $D$ with center $F$ such that $A\cong {\rm M}_r(D)$. It is well-known that the dimension of $D$ over $F$ is a square; that is, $[D:F]=k^2$, for some $k\geq1$. It follows that $[A:F]=r^2k^2$; and hence $[A:F]$ is also a square. Accordingly, the \textit{degree} of $A$ is defined to be ${\rm deg}(A)=\sqrt{[A:F]}$. Let $A$ be such an $F$-algebra with involution $\star$. As $F$ is invariant under $\star$, it can be checked that the set of traces $T_A=\{x+x^\star\;|\; x\in A\}$ of $A$ is a $F$-subspace of $A$, while the set $N_A$ is not. The dimension $[T_A:F]$ of $T_A$ over $F$ is given in the following lemma.
	 
\begin{lemma}\label{propostion_dimensions}
		Let $A$ be a finite-dimensional central simple $F$-algebra of degree $m$ with involution $\star$, and $d=[T_A:F]$. Then, either $d=\frac{m(m+1)}{2}$ or $d=\frac{m(m-1)}{2}$.
\end{lemma}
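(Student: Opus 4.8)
The plan is to compute $d$ by rank--nullity and then reduce to the split case, where the count becomes a matter of comparing symmetric and skew-symmetric matrices. First I would consider the $F$-linear map $\phi\colon A\to A$, $\phi(x)=x+x^\star$ (linearity uses that $\star$ is of the first kind, so $F$ is fixed pointwise). Its image is exactly $T_A$ and its kernel is the space of skew elements $A_-=\{x\in A : x^\star=-x\}$. Since $[A:F]=m^2$, rank--nullity yields
\[
d=[T_A:F]=m^2-[A_-:F],
\]
so it suffices to show $[A_-:F]\in\{\tfrac{m(m-1)}{2},\tfrac{m(m+1)}{2}\}$.

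Next I would extend scalars to a splitting field $K$ of $A$, so that $A_K:=A\otimes_F K\cong{\rm M}_m(K)$. As $\star$ fixes $F=Z(A)$ pointwise, its $K$-linear extension (still denoted $\star$) fixes $Z(A_K)=K$ and is again an involution of the first kind. Because $A_-=\ker\phi$ and kernels are preserved under the flat base change $-\otimes_F K$, we get $[A_-:F]=[(A_K)_-:K]$ with $(A_K)_-=\{X\in{\rm M}_m(K) : X^\star=-X\}$. Thus the problem is purely one about the split algebra.

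For ${\rm M}_m(K)$ I would invoke Skolem--Noether: the composite of $\star$ with transposition is a $K$-algebra automorphism, hence inner, so $X^\star=u^{-1}X^{\top}u$ for some invertible $u$, and $\star^2=\mathrm{id}$ forces $u^{\top}=\varepsilon u$ with $\varepsilon=\pm1$. The substitution $Y=uX$ then identifies $(A_K)_-$ with $\{Y : Y^{\top}=-\varepsilon Y\}$, i.e.\ with the symmetric matrices (dimension $\tfrac{m(m+1)}{2}$) when $\varepsilon=-1$, and with the skew-symmetric matrices (dimension $\tfrac{m(m-1)}{2}$) when $\varepsilon=1$. Feeding either value into the rank--nullity formula gives the two asserted values of $d$.

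The step needing the most care is characteristic $2$. There $\varepsilon=\pm1=1$, so only $u^{\top}=u$ occurs, and since $-Y=Y$ the condition $Y^{\top}=-\varepsilon Y$ collapses to $Y^{\top}=Y$; hence $[(A_K)_-:K]=\tfrac{m(m+1)}{2}$ and $d=\tfrac{m(m-1)}{2}$ uniformly. The pitfall to avoid is identifying $T_A$ with the symmetric elements $A_+$ (which coincide with $A_-$ in characteristic $2$): the correct object is the image $\phi(A)$ of alternating elements, and it is the rank--nullity identity rather than a naive symmetric/skew splitting that produces the right dimension. Away from characteristic $2$ one has $A=A_+\oplus A_-$ with $T_A=A_+$, so the orthogonal/symplectic dichotomy gives the two dimensions directly. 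Alternatively, $[A_-:K]$ may simply be quoted from the standard dimension count for first-kind involutions in \cite{knus-merkurjev-rost-tignol-98}, bypassing the explicit matrix computation.
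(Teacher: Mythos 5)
Your proof is correct, but it takes a genuinely different route from the paper: the paper's entire proof is a one-line citation of \cite[Proposition 2.6]{knus-merkurjev-rost-tignol-98}, whereas you reconstruct the dimension count from scratch. Your argument --- rank--nullity for $\phi(x)=x+x^\star$, flat base change to a splitting field $K$ (so $\ker\phi=A_-$ satisfies $[A_-:F]=[(A_K)_-:K]$), and the Skolem--Noether normal form $X^\star=u^{-1}X^\top u$ with $u^\top=\varepsilon u$, $\varepsilon=\pm1$ --- is essentially the standard proof underlying the cited proposition, so in substance you have unpacked the black box the paper invokes. The details check out: the substitution $Y=uX$ correctly converts $X^\star=-X$ into $Y^\top=-\varepsilon Y$, and your characteristic-$2$ discussion is exactly right: there $u^\top=u$ is forced, $A_-=A_+$ has dimension $\frac{m(m+1)}{2}$, and $d=\frac{m(m-1)}{2}$ uniformly, so working with the image of $\phi$ (the set $\mathrm{Symd}$ in the notation of \cite{knus-merkurjev-rost-tignol-98}) rather than naively identifying $T_A$ with $A_+$ is precisely what keeps the lemma valid in all characteristics --- a point the paper's citation leaves invisible even though the paper nowhere assumes $\mathrm{char}\,F\neq 2$. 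What each approach buys: the paper gets brevity by deferring to a standard reference; your version is self-contained, makes the $\varepsilon=\pm1$ (orthogonal versus symplectic) dichotomy explicit, which is the same dichotomy driving the case analysis in the proof of Corollary \ref{corollary_centrally finite case}, and you correctly observe at the end that quoting \cite{knus-merkurjev-rost-tignol-98} is an admissible shortcut --- which is exactly the paper's choice.
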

\begin{proof}
	This lemma is an immediate corollary of \cite[Proposition 2.6]{knus-merkurjev-rost-tignol-98}.
\end{proof}
	
		In view of this lemma, we see that the $F$-subspace $T_A$ is too big in $A$. As, we have mentioned in the Introduction, the susets $T_A$ and $N_A$ both are contained in $A_+$. Although $N_A$ is not an $F$-subspace of $A$, we also prove that if $N_A\subset Z(A)$, then $T_A\subset Z(A)$. Indeed, for any $x\in A$, we have 
		$$
		(1+x)(1+x)^\star=(1+x)(1+x^\star)=1+x+x^\star+xx^\star.
		$$
		Because both $(1+x)(1+x)^\star$ and $xx^\star$ are contained in $Z(A)$, we get $x+x^\star\in Z(A)$, yielding that $T_A\subseteq Z(A)$. Thus, the implication $N_A\subseteq Z(A)\Rightarrow T_A\subseteq Z(A)$ indicates that the set $N_A$, although is not an $F$-subspace of $A$,  is also quite big in $A$. In the next corollary, we demonstrate that the fact $T_A\subseteq Z(A)$ imposes a strong impact on the structure of the whole $A$. 
	
	\begin{corollary}\label{corollary_centrally finite case}
		Let $D$ be a centrally finite division ring with center $F$ and $n\geq1$. Let $\star$ be an involution on ${\rm M}_n(D)$. If $T_{{\rm M}_{n}(D)}\subseteq F$, then either 
		\begin{enumerate}[font=\normalfont]
			\item[(i)] $n=1$, and $D=F$ or $D$ is  a quaternion division algebra over $F$, or
			\item[(ii)] $n=2$, and $D=F$ and $\star$ is the ordinary symplectic involution on ${\rm M}_2(F)$; that is, the involution $\star$ is given by:
			$$
			\begin{pmatrix}
				a & b \\
				c & d 
			\end{pmatrix}^\star =
			\begin{pmatrix}
				d & - b \\
				- c & a 
			\end{pmatrix}.
			$$
		\end{enumerate}
	\end{corollary}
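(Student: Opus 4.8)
The plan is to convert the hypothesis $T_{\mathrm{M}_n(D)}\subseteq F$ into a numerical constraint on the degree of $A=\mathrm{M}_n(D)$ and then read off the structure of $A$ from Lemma \ref{propostion_dimensions}. Write $m=\mathrm{deg}(A)$, so that $[A:F]=m^2$ and $m^2=n^2[D:F]$. As noted just before the statement, $T_A$ is an $F$-subspace of $A$ because $F$ is fixed elementwise by $\star$; hence the assumption $T_A\subseteq F$ gives $d:=[T_A:F]\le[F:F]=1$. By Lemma \ref{propostion_dimensions} we have $d\in\{m(m+1)/2,\;m(m-1)/2\}$. Imposing $d\le 1$, the orthogonal value $m(m+1)/2\le 1$ forces $m=1$, while the symplectic value $m(m-1)/2\le 1$ forces $m\in\{1,2\}$. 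In either case $\mathrm{deg}(A)\in\{1,2\}$, i.e.\ $[A:F]\in\{1,4\}$.

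Next I would enumerate the algebras of these degrees through the Wedderburn--Artin description $A\cong\mathrm{M}_n(D)$. If $[A:F]=1$, then $A=F$, which forces $n=1$ and $D=F$, giving the first alternative in (i). If $[A:F]=4$, then $n^2[D:F]=4$; since $[D:F]$ is a perfect square, the only possibilities are $(n,[D:F])=(1,4)$, in which case $D$ is a four-dimensional division algebra over $F$, i.e.\ a quaternion division algebra (the second alternative in (i)), or $(n,[D:F])=(2,1)$, in which case $D=F$ and $A\cong\mathrm{M}_2(F)$, as in (ii). This exhausts the structural dichotomy.

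It remains to identify the involution in the degree-two cases. For $m=2$ the two values permitted by Lemma \ref{propostion_dimensions} are $m(m+1)/2=3$ and $m(m-1)/2=1$, and since $d\le 1$ we must have $d=1$; thus $\star$ is of symplectic type. On a quaternion division algebra the symplectic involution coincides with the canonical (conjugation) involution, consistent with (i). For $A\cong\mathrm{M}_2(F)$ I would invoke the standard classification of first-kind involutions (see \cite[\S 2]{knus-merkurjev-rost-tignol-98}): every such $\star$ has the form $X^\star=g^{-1}X^{t}g$ with $g\in\mathrm{GL}_2(F)$ satisfying $g^{t}=\pm g$, the symplectic case being exactly $g^{t}=-g$. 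In dimension $2$ the space of alternating forms is one-dimensional, so $g$ is a scalar multiple of $\left(\begin{smallmatrix}0&1\\-1&0\end{smallmatrix}\right)$; rescaling $g$ leaves $g^{-1}X^{t}g$ unchanged, so there is a unique symplectic involution, and a direct computation with this $g$ recovers precisely the displayed formula.

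The routine part is the degree bound and the Wedderburn--Artin enumeration, which follow immediately from Lemma \ref{propostion_dimensions}. I expect the \textbf{main obstacle} to be the last step: establishing that the symplectic involution on $\mathrm{M}_2(F)$ is literally the stated adjugate map. This rests on the Gram-matrix classification of first-kind involutions together with the one-dimensionality of alternating forms on $F^2$; a point to handle with care is characteristic $2$, where the orthogonal/symplectic dichotomy and the alternating-form normalization are more delicate, although one checks that the displayed formula still yields a symplectic involution since $X+X^\star=(\mathrm{tr}\,X)\,I$ remains central.
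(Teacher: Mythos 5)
Your proposal is correct, and it diverges from the paper in the final, decisive step. The first half is the same in both: use Lemma \ref{propostion_dimensions} to force $d=[T_A:F]\le 1$, hence $\deg(A)\in\{1,2\}$, and enumerate the Wedderburn--Artin possibilities $(n,[D:F])\in\{(1,1),(1,4),(2,1)\}$; here you are in fact slightly more careful than the paper, which asserts $m(m-1)/2=1$ outright where you correctly argue $m(m-1)/2\le 1$ and absorb the $m=1$ degenerate case into alternative (i). The routes then split on identifying $\star$ for $A\cong\mathrm{M}_2(F)$: the paper first invokes Chacron's semiprime theorem \cite[Theorem 2.3]{Chacron.2016a.paper} to upgrade the symplectic condition (C4) to the scalar condition (C5), and then solves for $X^\star$ entrywise from the two constraints $X+X^\star\in F$ and $XX^\star\in F$, obtaining the adjugate formula by bare matrix computation; you instead use the Gram-matrix classification of first-kind involutions, $X^\star=g^{-1}X^tg$ with $\star$ symplectic exactly when $g$ is alternating, together with the one-dimensionality of alternating forms on $F^2$, to conclude uniqueness of the symplectic involution and recover the formula. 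The paper's argument buys elementarity at the cost of an external appeal to Chacron; yours is shorter and works uniformly in all characteristics provided the classification is cited in its characteristic-free form (\cite[\S 2]{knus-merkurjev-rost-tignol-98}, where in characteristic $2$ ``alternating'' means symmetric with zero diagonal --- a space still spanned by $\left(\begin{smallmatrix}0&1\\-1&0\end{smallmatrix}\right)$, so uniqueness persists). One small caution on your closing remark: verifying that the displayed map is symplectic in characteristic $2$ is not the point that needs care --- what the argument requires there is \emph{uniqueness} of the symplectic involution, and that is exactly what the alternating-Gram-matrix formulation supplies; with that citation in place your proof is complete.
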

	\begin{proof}
		Let $[D:F]=k^2$, for some integer $k\geq1$. It follows that $A:={\rm M}_n(D)$ is a $F$-central simple algebra of degree $m:=kn$. In view of Lemma \ref{propostion_dimensions}, there are two possible cases:
		
		\smallskip 
		
		\textbf{\textit{Case 1.}} $[T_A:F]=\frac{m(m+1)}{2}$. In this case, the involution $\star$ must be of orthogonal type. Moreover, as $T_A\subseteq F$, we get that $\frac{m(m+1)}{2}=1$, from which it follows that $m=1$. This means that $n=k=1$, and so $A=D=F$. 
		
		\smallskip 
		
		\textbf{\textit{Case 2.}} $[T_A:F]=\frac{m(m-1)}{2}$.  As in Case 1, the condition $T_A\subseteq F$ implies that $\frac{m(m-1)}{2}=1$. It follows that $m=2$, which means that $n=2$ and $k=1$, or else $n=1$ and $k=2$. 
		
		\smallskip 
		
		\textit{Case 2.1.} $n=2$ and $k=1$. In this case, we have $D=F$ and so $A\cong {\rm M}_2(F)$. By \cite[Theorem 2.3]{Chacron.2016a.paper}, the involution $\star$ is scalar.
			
			Assume that 
			$\begin{pmatrix}
				a & b \\
				c & d 
			\end{pmatrix}\in A$ and 
			$\begin{pmatrix}
				a & b \\
				c & d 
			\end{pmatrix}^\star=\begin{pmatrix}
				m & n \\
				p & q 
			\end{pmatrix}$.
			Since $\star$ is a symplectic and scalar, it follows that 
			$\begin{pmatrix}
				a & b \\
				c & d 
			\end{pmatrix}+\begin{pmatrix}
			a & b \\
			c & d 
		\end{pmatrix}^\star\in F$, and $\begin{pmatrix}
		a & b \\
		c & d 
	\end{pmatrix}\begin{pmatrix}
	a & b \\
	c & d 
\end{pmatrix}^\star\in F$. Therefore,

			$$\begin{pmatrix}
				a+m & b+n \\
				c+p & d+q 
			\end{pmatrix} \in F$$ and 
			$$\begin{pmatrix}
				am-bc & -ab+bq \\
				cm-cd & -cb+dq 
			\end{pmatrix}\in F,$$
		which implies that $b+n=c+p=-ab+bq=cm-cd=0$. Hence, $n=-b, p=-c, q=a, m=d$, and we have
			$$
			\begin{pmatrix}
				a & b \\
				c & d 
			\end{pmatrix}^\star =
			\begin{pmatrix}
				d & - b \\
				- c & a 
			\end{pmatrix}.
			$$
		
		\smallskip 
		
		\textit{Case 2.2.} $n=1$ and $k=2$. It follows that $A=D$ and $[D:F]=4$, which means that $A$ is a quaternion algebra over $F$. The proof of the corollary is now complete.
	\end{proof}
	The following lemma follows immediately from \cite[Theorem 2.3]{Chacron.2016a.paper}.
	
	\begin{lemma}\label{lemma_semi-prime}
		Let $A$ be an unital semi-prime ring with center $Z$, and $\star$ be an involution on $A$. Then, $T_A\subseteq Z$ if and only if $N_A\subseteq Z$. 
	\end{lemma}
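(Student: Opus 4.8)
The plan is to treat the two implications separately, observing first that, as noted in the Introduction, the hypotheses translate directly into Chacron's conditions: $T_A\subseteq Z$ is exactly condition (C4) and $N_A\subseteq Z$ is exactly condition (C5). So the lemma is the assertion that (C4) and (C5) are equivalent for a semiprime ring, and the two directions are of very unequal difficulty.

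First I would dispose of the easy implication $N_A\subseteq Z\Rightarrow T_A\subseteq Z$, which in fact needs no hypothesis on $A$ at all and has already been carried out in the paragraph preceding Corollary \ref{corollary_centrally finite case}. For any $x\in A$ one has the identity
\[
(1+x)(1+x)^\star=1+(x+x^\star)+xx^\star .
\]
Here $(1+x)(1+x)^\star$ and $xx^\star$ both lie in $N_A\subseteq Z$, and $1\in Z$, so subtracting forces $x+x^\star\in Z$. Since $x$ was arbitrary, $T_A\subseteq Z$. I would record this cleanly as the trivial half.

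For the reverse implication $T_A\subseteq Z\Rightarrow N_A\subseteq Z$ I would first extract what (C4) gives by polarization. Applying the hypothesis to the element $xy^\star$ and using $(xy^\star)^\star=yx^\star$, one obtains
\[
xy^\star+yx^\star\in Z\qquad\text{for all }x,y\in A .
\]
Setting $y=x$ yields $2\,xx^\star\in Z$ for every $x$. Consequently, whenever $2$ is invertible (in particular over a field of characteristic different from $2$), we get $xx^\star\in Z$ immediately and are done. \emph{This is where the real difficulty lies}: in characteristic $2$, or more generally when $\tfrac12\notin A$, the relation $2\,xx^\star\in Z$ carries no information, and one cannot isolate $xx^\star$ by such elementary means. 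It is precisely in this residual case that the semiprimeness of $A$ becomes indispensable — it is what prevents pathological counterexamples and allows one to push from $2\,xx^\star\in Z$ to $xx^\star\in Z$. I would therefore invoke \cite[Theorem 2.3]{Chacron.2016a.paper}, which establishes the equivalence of (C1), (C4) and (C5) for an arbitrary semiprime ring, to conclude $N_A\subseteq Z$; combined with the elementary half above, this gives the stated equivalence. The main obstacle to a fully self-contained argument is thus exactly the characteristic-$2$ analysis subsumed in the cited theorem, and I would not attempt to reproduce it here.
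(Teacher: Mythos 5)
Your proposal is correct and, in substance, takes the same route as the paper: the paper's entire proof is the citation of \cite[Theorem 2.3]{Chacron.2016a.paper}, which is exactly what your argument rests on for the nontrivial implication $T_A\subseteq Z\Rightarrow N_A\subseteq Z$. Your elementary extras are both sound and consistent with the paper --- the $(1+x)(1+x)^\star$ identity for the easy direction appears verbatim in the text before Corollary \ref{corollary_centrally finite case}, and your polarization correctly isolates characteristic $2$ as the only case genuinely requiring semiprimeness via the cited theorem.
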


	\begin{lemma}\label{lemma_centrally finite}
		Let $D$ be a division ring with center $F$,  $n$ a positive integer, and  $\star$ an involution on ${\rm M}_{n}(D)$. If $T_{{\rm M}_{n}(D)}\subseteq F$, then $D$ is centrally finite. Consequently, either assertion (i) or (ii) of Corollary~\ref{corollary_centrally finite case} holds for $D$.
	\end{lemma}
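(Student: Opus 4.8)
The plan is to show that every element of $D$ satisfies a quadratic equation over $F$, so that $D$ is algebraic of bounded degree over its center, and then to invoke the classical fact that such a division ring is centrally finite; the final assertion is then merely an application of Corollary~\ref{corollary_centrally finite case}.

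First I would record two linear consequences of the hypothesis for $A := {\rm M}_n(D)$. Its center is $Z(A) = F\cdot I_n$, which I identify with $F$, and $A$ is simple, hence semiprime. Thus the assumption $T_A \subseteq F$ together with Lemma~\ref{lemma_semi-prime} gives $N_A \subseteq F$ as well; that is, $x + x^\star \in F\cdot I_n$ and $xx^\star \in F\cdot I_n$ for every $x \in A$. Note that this reduction is uniform in $n$, so no case analysis on the size of the matrices is needed.

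The key step is a short computation applied to the scalar matrices. Fix $d \in D$ and put $x = d\,I_n$. Writing $x + x^\star = t\,I_n$ and $xx^\star = m\,I_n$ with $t, m \in F$, I have $x^\star = t\,I_n - x$, and since $t$ is central,
\[
m\,I_n = xx^\star = x\,(t\,I_n - x) = t\,x - x^2 = (td - d^2)\,I_n,
\]
so that $d^2 - t\,d + m = 0$ with $t, m \in F$. Hence each $d \in D$ is a root of a polynomial of degree at most $2$ over $F$; in other words, $D$ is a division ring that is algebraic of bounded degree over its center $F$. (Equivalently, one may first observe that $D\cdot I_n$ is a $\star$-invariant subring by Lemma~\ref{lemma_restriction} and run the argument through the induced involution on $D$.)

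It remains to pass from bounded degree to centrally finite, and this is the step I expect to be the main obstacle. The statement I want is the classical theorem that a division ring which is algebraic of bounded degree over its center is finite-dimensional over that center: concretely, being ``algebraic of bounded degree'' forces a polynomial identity, after which Kaplansky's theorem \cite{Pa_Kaplansky_1948} (cf.\ Proposition~\ref{simple PI}) yields $[D:F] < \infty$. Granting this, $D$ is centrally finite, and the ``consequently'' clause follows at once by applying Corollary~\ref{corollary_centrally finite case} to ${\rm M}_n(D)$. The delicate point is thus the justification of the bounded-degree-implies-finite-dimensional input; if one prefers to keep the argument self-contained, one must handle the reduction uniformly in all characteristics rather than completing the square, which only works when ${\rm char}\,F \ne 2$.
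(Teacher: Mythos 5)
Your proposal matches the paper's proof essentially step for step: the paper likewise invokes Lemma~\ref{lemma_semi-prime} to get $s:=a^\star+a\in F$ and $p:=a^\star a\in F$ for every $a\in{\rm M}_n(D)$, checks $a^2-sa+p=0$ so that ${\rm M}_n(D)$, and hence $D$, is algebraic of bounded degree $2$ over $F$, and then concludes via Corollary~\ref{corollary_centrally finite case}. The only (cosmetic) difference is the citation closing the last step: the paper quotes Jacobson's theorem \cite[Theorem 7]{Pa_Jacobson_1945} directly for ``algebraic of bounded degree implies centrally finite,'' whereas you route it through a polynomial identity and Kaplansky's theorem --- a standard alternative that the paper itself uses in Proposition~\ref{propostion_involution on subgroup}, so the step you flag as delicate is safely covered by either reference and needs no characteristic-dependent completion of the square.
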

	\begin{proof}
		In view of Lemma \ref{lemma_semi-prime}, the involution $\star$ is both symplectic and scalar.  Hence, for each $a\in {\rm M}_n(D)$, the elements $s:=a^\star+a$ and $p:=a^\star a$ are both contained in $F$. It is straightforward to check that $a^2-sa+p=0$. This shows that the matrix ring ${\rm M}_n(D)$ is algebraic of bounded degree $2$, and so is $D$. By the famous result of Jacobson \cite[Theorem 7]{Pa_Jacobson_1945}, $D$ is finite dimensional over $F$ as desired. 
	\end{proof}

	Next, we restrict the conditions for the involution to some certain subgroup of ${\rm GL}_n(D)$. Firstly, we consider the case $n=1$; that is, the case of ${\rm GL}_1(D)=D^\times$.
	\begin{lemma}\label{lemma_restriction to group}
		Let $R$ be a ring with involution $\star$, $G$  a subgroup of $R^\times$. Let $Z[G]$ be the subring of $R$ generated by $G$ over the center $Z$ of $R$. If $T_G\subseteq Z$, then the restriction of $\star$ to $Z[G]$ is a symplectic involution on $Z[G]$. 
	\end{lemma}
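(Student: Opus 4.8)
The plan is to prove the statement in two stages: first that the subring $Z[G]$ is invariant under $\star$, so that the restriction is genuinely an involution, and then that this restricted involution satisfies condition (C4), i.e. $T_{Z[G]}\subseteq Z(Z[G])$. I would start from an explicit description of $Z[G]$. Since $G$ is a subgroup (hence multiplicatively closed) and $Z$ is central, every monomial in elements of $Z\cup G\cup\{1\}$ collapses to a single product $zg$ with $z\in Z$ and $g\in G$; by Remark \ref{remark_subring} this gives $Z[G]=\{\sum_i z_i g_i\mid z_i\in Z,\ g_i\in G\}$, the $Z$-linear span of $G$. The hypothesis $T_G\subseteq Z$ is exactly what makes $Z[G]$ closed under $\star$: for each $g\in G$ we have $g^\star=(g+g^\star)-g\in Z+Zg\subseteq Z[G]$, so that $G^\star\subseteq Z[G]$. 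Taking $S:=Z\cup G\cup G^\star$, which satisfies $S^\star\subseteq S$ and generates $Z[G]$, Proposition \ref{propostion_restrict subring} then yields that $\star$ restricts to an involution on $\overline{S}=Z[G]$.

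For the second stage I would compute the trace of a general element, and here the standing assumption that $\star$ is of the first kind is essential: it fixes $Z$ pointwise, so $z^\star=z$ for every $z\in Z$. For $y=\sum_i z_i g_i\in Z[G]$, additivity of $\star$ together with $(z_i g_i)^\star=g_i^\star z_i^\star=z_i g_i^\star$ gives
\[
y+y^\star=\sum_i z_i\,(g_i+g_i^\star).
\]
Each term $g_i+g_i^\star$ lies in $Z$ by hypothesis, hence so does the whole sum, and therefore $y+y^\star\in Z\subseteq Z(Z[G])$. This shows $T_{Z[G]}\subseteq Z(Z[G])$, which is precisely condition (C4), so the restricted involution is symplectic.

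The main obstacle --- indeed the only point where the argument could fail --- is the scalar step in the second stage. If the central coefficients $z_i$ were not $\star$-fixed, the extra term $(z_i-z_i^\star)g_i$ would survive and need not be central, so the restriction to involutions of the first kind is doing genuine work here rather than being a mere convenience; I would flag this dependence explicitly. Everything else is routine once one uses that $G$ is a group and that $Z$ is central.
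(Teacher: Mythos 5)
Your proof is correct and takes essentially the same route as the paper's: both write an arbitrary element of $Z[G]$ as a $Z$-linear combination $\sum_i z_i g_i$ (valid since $G$ is multiplicatively closed and $Z$ is central), use the first-kind hypothesis to compute $y+y^\star=\sum_i z_i(g_i+g_i^\star)\in Z$, and deduce from this both the $\star$-invariance of $Z[G]$ and condition (C4). The only cosmetic difference is that the paper gets closure under $\star$ directly from $x^\star=(x+x^\star)-x$ for a general element, whereas you do it at the level of generators and then invoke Proposition \ref{propostion_restrict subring}; the underlying computation is identical, and your explicit flagging of where the first-kind assumption is used matches the paper's (implicit) reliance on it.
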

	
	\begin{proof}
		Take an arbitrary element $x\in Z[G]$, and write it in the form
		$$
		x= a_1g_1+a_2g_2+\cdots+a_ng_n,
		$$
		where $a_i\in Z$ and $g_i\in G$. Because $\star$ leaves fixed elements in $Z$, it follows that 
		$$
		x+x^\star=a_1(g_1+g_1^\star)+a_2(g_2+g_2^\star)+\cdots+a_n(g_n+g_n^\star). 
		$$
		As $T_G\subseteq Z$, we conclude that $g_i+g_i^\star\in Z$ for all $i\in \{1,\dots,n\}$, and so $x+x^\star\in Z$, which implies that $x^\star\in Z[G]$. Hence, the restriction of $\star$ to $Z[G]$ is also an involution on $Z[G]$ which is clearly symplectic. 
	\end{proof}
	
	\begin{lemma}\label{lemma_invariant}
		Let $D$ be a division ring with center $F$, and $H$ a subset of $D^\times$. For an element $g\in D$, let $F(g)$ be the subfield of $D$ generated by $g$ over $F$. If $h^{-1}gh\in F(g)$ for any $h\in H$, then $F(g)$ is $H$-invariant.
	\end{lemma}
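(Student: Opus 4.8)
The plan is to recognize that conjugation by an element $h\in H$ is a ring automorphism of $D$ that fixes $F$ pointwise, and then to track what it does to the generated subfield $F(g)$. Write $\varphi_h\colon D\to D$ for the map $x\mapsto h^{-1}xh$. Since $h\in D^\times$, this is a bijective ring homomorphism with inverse $x\mapsto hxh^{-1}$, hence an automorphism of $D$. Moreover, because $F=Z(D)$ is central, every $a\in F$ satisfies $h^{-1}ah=a$, so $\varphi_h$ restricts to the identity on $F$; in other words $\varphi_h$ is an $F$-algebra automorphism.

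The key step is to show that $\varphi_h(F(g))=F(h^{-1}gh)$, i.e.\ that $\varphi_h$ carries the subfield generated by $g$ over $F$ onto the subfield generated by $\varphi_h(g)=h^{-1}gh$ over $F$. I would establish this using the explicit construction of generated division subrings recalled in Remark~\ref{remark_division subring} (with $F$ in place of $\mathbb{P}$ and $S=\{g\}$): setting $L_0=\overline{F\cup\{g\}}$ and $L_{k+1}=\overline{L_k\cup L_k^{-1}}$, one has $F(g)=\bigcup_{k\geq 0}L_k$. Since $\varphi_h$ is a ring automorphism, it commutes with the formation of subrings generated by a set and with passage to inverses, so that $\varphi_h(\overline{T})=\overline{\varphi_h(T)}$ for any subset $T$ and $\varphi_h(x^{-1})=\varphi_h(x)^{-1}$. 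An easy induction on $k$ then shows that $\varphi_h(L_k)$ is precisely the $k$-th stage in the construction of $F(h^{-1}gh)$, and taking unions yields $\varphi_h(F(g))=F(h^{-1}gh)$. (For the conclusion only the inclusion $\varphi_h(F(g))\subseteq F(h^{-1}gh)$ is actually needed.)

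Finally I would invoke the hypothesis. Since $h^{-1}gh\in F(g)$ by assumption and $F\subseteq F(g)$ automatically, the subfield $F(g)$ is a division subring of $D$ containing both $F$ and $h^{-1}gh$; as $F(h^{-1}gh)$ is by definition the smallest such division subring, we obtain $F(h^{-1}gh)\subseteq F(g)$. Combining this with the previous step gives $h^{-1}F(g)h=\varphi_h(F(g))=F(h^{-1}gh)\subseteq F(g)$. Because $h\in H$ was arbitrary and is automatically nonzero (being a unit), this is exactly the statement that $F(g)$ is $H$-invariant.

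The only real obstacle is the middle step, namely verifying carefully that conjugation transports $F(g)$ onto $F(h^{-1}gh)$. This is intuitively clear because an automorphism preserves all the field operations used to build $F(g)$, but when $g$ is transcendental over $F$ one cannot simply describe elements of $F(g)$ as polynomial or even rational expressions in $g$ in a naive way, and so the cleanest rigorous argument proceeds through the inductive construction of the generated division subring, as above.
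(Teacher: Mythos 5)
Your proof is correct, but it takes a genuinely different route from the paper's. The paper argues directly with rational expressions: every element of $F(g)$ has the form $a(g)b(g)^{-1}$ with $a(t),b(t)\in F[t]$, $b(g)\neq 0$, and since $h^{-1}gh\in F(g)$ one gets $h^{-1}a(g)h=a(h^{-1}gh)\in F(g)$ and $h^{-1}b(g)h=b(h^{-1}gh)\in F(g)$, whence $h^{-1}\bigl(a(g)b(g)^{-1}\bigr)h=a(h^{-1}gh)\,b(h^{-1}gh)^{-1}\in F(g)$. You instead note that $\varphi_h$ is an $F$-algebra automorphism, transport $F(g)$ onto $F(h^{-1}gh)$ via the inductive construction of Remark~\ref{remark_division subring}, and finish by minimality of the generated subfield; this is structurally cleaner and would generalize verbatim to division subrings generated by several non-commuting elements, where no normal form for elements exists. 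However, the ``obstacle'' you flag at the end is not real, and recognizing this is exactly what makes the paper's shorter proof work: $F(g)$ is a \emph{subfield} (as the statement itself says), because $g$ commutes with the central $F$ and with itself; hence $F[g]$ is a commutative domain, and the set of quotients $\{a(g)b(g)^{-1}\}$ is closed under addition, multiplication and inversion, so it is already a field and equals $F(g)$ regardless of whether $g$ is algebraic or transcendental over $F$. Your caution about transcendental generators would be warranted only for division subrings generated by sets of non-commuting elements. (Incidentally, if you compare with the paper's displayed computation, note a slip there: $h^{-1}b(g)^{-1}h$ equals $(h^{-1}b(g)h)^{-1}$, not $(hb(g)h^{-1})^{-1}$; your automorphism bookkeeping avoids this kind of error automatically.)
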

	\begin{proof}
		Every element in $F(g)$ can be written in the form $a(g)b(g)^{-1}$, where $a(t),b(t)$ are two polynomials in $F[t]$ such that $b(g)\ne0$. For any $0\ne h\in H$, we have 
		$$
		h^{-1}a(g)b(g)^{-1}h=(h^{-1}a(g)h)(h^{-1}b(g)^{-1}h)=(h^{-1}a(g)h)(hb(g)h^{-1})^{-1}.
		$$
		Write 
		$$
		\begin{aligned}
			& a(g)=a_0+a_1g+\cdots+a_ng^n,\\
			& b(g)=b_0+b_1g+\cdots+b_mg^m,
		\end{aligned}
		$$
		where $a_i,b_j\in F$ for all $0\leq i\leq n$ and $0\leq j\leq m$. Because $h^{-1}gh\in F(g)$, it easily checked that the elements  $h^{-1}a(g)h$ and $hb(g)h^{-1}$ belong to $F(g)$. It follows that $h^{-1}a(g)b(g)^{-1}h\in F(g)$, and the proof of the lemma is proved.
	\end{proof}
	
	\begin{lemma}\label{lemma_restriction to group_scalar}
		Let $D$ be a division ring with center $F$,  $G$  non-central subnormal subgroup of $D^\times$, and $\star$  an involution on $D$. Then, $N_G\subseteq F$ if and only if  $\star$ is scalar.
	\end{lemma}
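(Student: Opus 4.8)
The plan is to dispatch the ``if'' direction immediately and spend all the effort on the converse. If $\star$ is scalar then $xx^\star\in F$ for every $x\in D$, so in particular $N_G\subseteq N_D\subseteq F$. For the converse, assume $N_G\subseteq F$. Since $D$ is a domain, hence semiprime, Lemma~\ref{lemma_semi-prime} tells me that proving $\star$ scalar is the same as proving $T_D\subseteq F$; and in characteristic different from $2$ this is in turn equivalent to $D_+=F$, because a symmetric $x$ then satisfies $2x=x+x^\star\in T_D$. So the real target is $D_+\subseteq F$. First I would record the local structure on $G$: for $g\in G$ set $n(g)=gg^\star\in F$. The computation $n(gh)=gh\,h^\star g^\star=g\,n(h)\,g^\star=n(g)n(h)$ shows that $n\colon G\to F^\times$ is a homomorphism, whence $g^\star=n(g)g^{-1}$. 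In particular $g$ commutes with $g^\star$ and $g^\star\in F(g)$.

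The engine of the argument is the following consequence of $g^\star=n(g)g^{-1}$: both $D_+$ and $D_-$ are invariant under conjugation by $G$. Indeed, for $s\in D_+$ and $g\in G$ one computes $(g^{-1}sg)^\star=g^\star s(g^\star)^{-1}=n(g)g^{-1}s\,n(g)^{-1}g=g^{-1}sg$, and the same computation gives $(g^{-1}tg)^\star=-g^{-1}tg$ for $t\in D_-$. Next I would note that $G$ is \emph{non-abelian}: since $G$ is non-central subnormal, $C_D(G)=F$, so an abelian $G$ would satisfy $G\subseteq C_D(G)=F$, contradicting non-centrality. Now let $K$ be the division subring of $D$ generated by $D_+$. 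As $D_+$ is $G$-invariant, so is $K$, and a Cartan--Brauer--Hua-type theorem for non-central subnormal subgroups (a $G$-invariant division subring is either central or all of $D$) forces $K\subseteq F$ or $K=D$. In the first case $D_+\subseteq F$, which is exactly the desired conclusion.

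The hard part is to rule out the second alternative $K=D$, i.e.\ that the symmetric elements generate all of $D$. Here I would exploit the finer information carried by $N_G\subseteq F$. Writing $g=s_g+t_g$ with $s_g=\tfrac12(g+g^\star)\in D_+$ and $t_g=\tfrac12(g-g^\star)\in D_-$, the equality $gg^\star=g^\star g$ yields $[s_g,t_g]=0$ and $s_g^2-t_g^2=n(g)\in F$, so each $g$ lies in the commutative field $F(s_g,t_g)$ with $t_g^2\in F(s_g)$. The goal would be to leverage these relations, together with the $G$-invariance of $D_\pm$ and $C_D(G)=F$, to show that every element of $G$ is algebraic of bounded degree over $F$; Jacobson's theorem \cite[Theorem~7]{Pa_Jacobson_1945} would then make $D$ centrally finite, and Corollary~\ref{corollary_centrally finite case} together with Lemma~\ref{lemma_centrally finite} would pin $D$ down to a quaternion algebra carrying the canonical (symplectic, hence scalar) involution, excluding $K=D$ unless $D_+=F$. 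I expect this elimination to be the main obstacle: converting the per-element quadratic relations into a uniform degree bound is precisely where the subnormality of $G$ must be used in an essential way, and the characteristic-$2$ situation (where the passage between $D_+$ and $T_D$ breaks down) would have to be handled separately.
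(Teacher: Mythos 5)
Your preparatory steps are all correct: the ``if'' direction, the multiplicativity of $n(g)=gg^\star$ on $G$, the consequent identity $g^\star=n(g)g^{-1}$, the $G$-conjugation-invariance of $D_+$ and $D_-$, the non-abelianness of $G$, and the Stuth/Cartan--Brauer--Hua dichotomy $K\subseteq F$ or $K=D$ for the division subring $K$ generated by $D_+$. But the proof is not complete, and you say so yourself: the branch $K=D$ is left open, and that branch \emph{is} the lemma. The route you sketch for closing it cannot work as stated. The quadratic $g^2-(g+g^\star)g+n(g)=0$ has its middle coefficient $g+g^\star=g+n(g)g^{-1}$ only in $F(g)$, not in $F$; knowing $T_G\subseteq F$ is precisely equivalent to what is being proved, so no algebraicity of bounded degree over $F$ is available and Jacobson's theorem cannot be invoked. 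Worse, the dichotomy is structurally incapable of deciding the question in characteristic $2$: for the canonical (scalar, hence norm-central) involution on a quaternion division algebra $D$ in characteristic $2$, the symmetric elements are exactly the reduced-trace-zero elements, a $3$-dimensional space containing $F$, $v$ and $uv$ in the standard presentation $u^2+u=a$, $v^2=b$; since $v\cdot uv=(u+1)b$, these generate all of $D$, so $K=D$ occurs \emph{even when the conclusion of the lemma holds}. Hence no argument can rule out $K=D$ there, and inside that branch you would have to prove scalarity from scratch.

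The paper's proof avoids $D_+$ entirely and supplies the idea you are missing. It inducts along the subnormal series $G=G_r\unlhd G_{r-1}\unlhd\cdots\unlhd G_0=D^\times$, proving at each step that $N_{G_i}\subseteq F$ forces $N_{G_{i-1}}\subseteq F$. Fixing $g\in G_i\setminus F$ and taking $x\in G_{i-1}$, one applies the norm hypothesis to the conjugate $x^{-1}gx\in G_i$ and substitutes $g^\star=n(g)g^{-1}$ (your own identity) to obtain $(xx^\star)^{-1}g\,(xx^\star)\in F(g)$. Thus it is the \emph{commutative} subfield $F(g)$ that is invariant --- under the set of norms $N_{G_{i-1}}$, via Lemma~\ref{lemma_invariant} --- and the Cartan--Brauer--Hua-type result \cite[Lemma 2.1(ii)]{bien-hai-hue} for such norm sets yields $F(g)\subseteq F$ or $F(g)=D$; here the second horn dies instantly because $F(g)$ is commutative while $D$ is not, and the first contradicts $g\notin F$. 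Working with the subfield generated by a single non-central element, rather than with the ring generated by all symmetric elements, makes the bad branch of the dichotomy vacuous in every characteristic and never passes through $T_D$, $D_+$, or Lemma~\ref{lemma_semi-prime}. If you want to salvage your setup, redirect your invariance computation from $D_\pm$ to $F(g)$ along these lines; as it stands, the decisive step of your proposal is a genuine gap.
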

	\begin{proof}
		The ``if'' part is clear. Now, assume that $N_G \subseteq F$. To prove $N_D\subseteq F$, it suffices to show that $N_{D^\times} \subseteq F$. Because $G$ is subnormal in $D^\times$, there exist a smallest integer $r\geq1$ and a series of subgroups
		$$
		G=G_r\unlhd G_{r-1}\unlhd\cdots\unlhd G_1\unlhd G_0=D^\times,
		$$
		in which $G_i$ is normal in $G_{i-1}$ for all $1\leq i\leq r$. For each $i$, we claim that if $N_{G_i}\subseteq F$ then $N_{G_{i-1}}\subseteq F$. Assume by contrary that $N_{G_{i-1}}\not\subseteq  F$. Fix an element $g\in G_i\backslash F $. For any $x\in G_{i-1}$, since $x^{-1}gx\in G_i$, we have 
		$$
		(x^{-1}gx)(x^{-1}gx)^\star= x^{-1}gxx^\star g^\star (x^\star)^{-1}\in F.
		$$
		Therefore $ x^{-1}gxx^\star g^\star (x^\star)^{-1}=a$, for some $a\in F$, which implies  that 
		$$gxx^\star g^\star=a xx^\star.\eqno (1)$$ Since $N_{G_i}\subseteq F$ and $g\in G_i$, there exists some element $b\in F$ such that $gg^\star=b$. Replace  $g^\star = bg^{-1}$ in (1), we get  $bgxx^\star g^{-1}=a xx^\star$, which implies that $$
		(xx^\star)^{-1}gxx^\star=b^{-1}ag\in F(g).
		$$
		Because $x$ was chosen arbitrarily in $G_{i-1}$, the last equation shows that the subfield $F(g)$ of $D$ is $N_{G_{i-1}}$-invariant by Lemma \ref{lemma_invariant}. Moreover, as $G\subseteq G_{i-1}$,  it is easy to check that $g N_{G_{i-1}} g^\star\subseteq N_{G_{i-1}}$ for any $g\in G$ and that $1\in N_{G_{i-1}}$. According to \cite[Lemma 2.1(ii)]{bien-hai-hue}, we get that  either $F(g)\subseteq F$ or $F(g)=D$. If the first case occurs, we have $g\in F$, which is impossible. The second case is impossible because $D$ is non-commutative. Thus, the claim is shown. Now, by   induction on $r$, we conclude that $N_D\subseteq F$, and so $\star$ is scalar. 
	\end{proof}
	
	\begin{proposition}\label{propostion_involution on subgroup}
		Let $D$ be a division ring with center $F$, and $G$ a subgroup of $D^{\times}$ such that $F(G)=D$. Let $\star$ be an involution on $D$. If $T_G\subseteq F$, then $D$ is a centrally finite division ring. 
	\end{proposition}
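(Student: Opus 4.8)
The plan is to reduce the statement to the situation already handled in Lemma~\ref{lemma_centrally finite}, namely to show that $D$ is algebraic of bounded degree $2$ over its center $F$; Jacobson's theorem \cite[Theorem 7]{Pa_Jacobson_1945} then forces $D$ to be centrally finite. The starting point is Lemma~\ref{lemma_restriction to group}: since $T_G\subseteq F=Z(D)$, the restriction of $\star$ to the subring $F[G]$ generated by $G$ over $F$ is a symplectic involution, and in particular $T_{F[G]}\subseteq F$. As $F[G]$ is a subring of a division ring it is a unital domain, hence semiprime, so I can invoke Lemma~\ref{lemma_semi-prime} on $A=F[G]$ to pass from traces to norms.

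The key step is to pin down the center of $F[G]$; I claim $Z(F[G])=F$. Indeed, if $z\in Z(F[G])$ then $z$ commutes with every element of $G$, and in a division ring the centralizer $C_D(z)$ is itself a division subring containing $F$. It therefore contains the division subring $F(G)$ generated by $G$ over $F$, which by hypothesis is all of $D$. Thus $z$ is central in $D$, i.e. $z\in F$, and the reverse inclusion $F\subseteq Z(F[G])$ is clear. With $Z(F[G])=F$ in hand, Lemma~\ref{lemma_semi-prime} upgrades $T_{F[G]}\subseteq F$ to $N_{F[G]}\subseteq Z(F[G])=F$. Consequently, for every $x\in F[G]$ both $s:=x+x^\star$ and $p:=xx^\star$ lie in $F$, and the identity $x^2-sx+p=0$ shows that each element of $F[G]$ is algebraic over $F$ of degree at most $2$.

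It remains to recognize $F[G]$ as all of $D$. Because $F[G]$ is a domain in which every nonzero element $x$ satisfies $x^2-sx+p=0$ with $s,p\in F$, it is in fact a division ring: if $p\neq0$ then $x^{-1}=(s-x)p^{-1}=x^\star p^{-1}\in F[G]$, while if $p=0$ the domain property applied to $x(x-s)=0$ forces $x=s\in F^\times$. Hence $F[G]$ is a division subring of $D$ containing $F$ and $G$, so $F(G)\subseteq F[G]$; combined with $F[G]\subseteq F(G)$ this yields $F[G]=F(G)=D$. Therefore $D$ itself is algebraic of bounded degree $2$ over $F$, and Jacobson's theorem \cite[Theorem 7]{Pa_Jacobson_1945} gives $[D:F]<\infty$, as required. (One may then feed this back into Lemma~\ref{lemma_centrally finite} or Corollary~\ref{corollary_centrally finite case} to describe $D$ explicitly.)

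The main obstacle I anticipate is the passage from ``norms central in $F[G]$'' to ``norms lying in $F$.'' The semiprime lemma only delivers $N_{F[G]}\subseteq Z(F[G])$, and a priori $Z(F[G])$ could be strictly larger than $F$; the whole argument hinges on the centralizer computation showing $Z(F[G])=F$, which is precisely where the hypothesis $F(G)=D$ enters. Once the norms are genuinely in $F$, the remaining degree-$2$ and Jacobson steps are routine and already present in the paper.
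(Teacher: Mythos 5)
Your proof is correct, and its first half coincides with the paper's own argument: both apply Lemma~\ref{lemma_restriction to group} to make $\star$ a symplectic involution on $F[G]$, pass from traces to norms using semiprimeness of $F[G]$ (you via Lemma~\ref{lemma_semi-prime}, the paper via \cite[Theorem 2.3]{Chacron.2016a.paper}, which is the same tool), and use the same centralizer computation, powered by the hypothesis $F(G)=D$, to get $Z(F[G])=F$ and hence the quadratic relation $x^2-sx+p=0$ with $s,p\in F$ for every $x\in F[G]$. The endgame, however, is genuinely different. The paper stays inside PI theory: Kaplansky's theorem \cite[Theorem 3]{Pa_Kaplansky_1948} makes the bounded-degree-$2$ ring $F[G]$ a PI ring, Amitsur's results \cite[Lemma 1 and Theorem 1]{amitsur} produce its division ring of quotients, identify it with $F(G)=D$, and transfer the polynomial identity to $D$, after which \cite[Theorem 1]{Pa_Kaplansky_1948} gives $[D:F]<\infty$. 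You instead notice that the quadratic relation already furnishes explicit inverses, $x^{-1}=x^\star p^{-1}\in F[G]$, so that $F[G]$ is itself a division ring, whence $F[G]=F(G)=D$, and then Jacobson's bounded-degree theorem \cite[Theorem 7]{Pa_Jacobson_1945} --- the same result the paper invokes in Lemma~\ref{lemma_centrally finite} --- finishes at once. (Two small remarks: your $p=0$ case is actually vacuous, since $\star$ is bijective and $D$ is a domain, so $p=xx^\star\neq0$ whenever $x\neq0$, though your handling of it is still valid; and ``symplectic on $F[G]$'' literally gives $T_{F[G]}\subseteq Z(F[G])$ rather than $T_{F[G]}\subseteq F$, but the proof of Lemma~\ref{lemma_restriction to group} does show traces land in $F$, and in any case the weaker containment suffices for Lemma~\ref{lemma_semi-prime}.) Your route is more elementary, bypassing Amitsur's localization machinery and one application of Kaplansky, and it delivers the equality $F[G]=F(G)$ directly, which the paper only recovers afterwards by citing \cite[Lemma 2.3]{Pa_hai-thin_2009} in the proof of Theorem~\ref{theorem_subnormal skew linear group}; the paper's PI route, on the other hand, is the template it reuses for the almost locally simple artinian setting of Section~5.
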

	\begin{proof}
		Assume that $T_G\subseteq F$. Let $F[G]$ be the subring of $D$ generated by $G$ over $F$. According to Lemma \ref{lemma_restriction to group}, we conclude that the restriction of $\star$ to $F[G]$ is an symplectic involution on $F[G]$. Hence, by \cite[Theorem 2.3]{Chacron.2016a.paper}, the restriction of $\star$ to $F[G]$ is scalar. So, for each $a\in F[G]$, we have $aa^\star\in Z(F[G])\subseteq C_D(G)$, where $C_D(G)$ is the centralizer of $G$ in $D$. Since $F(G)=D$, we  have $C_D(G)=C_D(D)=F$, and so, $aa^\star\in F$. If we set $s:=a^\star+a$ and $p:=a^\star a$, then $s$ and $p$ are both in $F$. It is straightforward to check that $a^2-sa+p=0$, from which it follows that $a$ satisfies the polynomial $x^2-sx+p\in F[x]$. This shows that the prime ring $F[G]$ is algebraic of bound degree $2$ over the field $F$, and by \cite[Theorem~ 3]{Pa_Kaplansky_1948}, $F[G]$ satisfies a polynomial identity. By \cite[Lemma 1]{amitsur}, $F[G]$ has a division ring of quotients, consisting of all elements of the form $sr^{-1}$, where $r,s\in F[G]$ and $r\ne0$, which coincides with $F(G)=D$. According to \cite[Theorem 1]{amitsur}, we conclude that $D=F(G)$ satisfies a polynomial identity, and so $[D:F]<\infty$ by \cite[Theorem~1]{Pa_Kaplansky_1948}. 
	\end{proof}

We are now ready to prove the main theorem of this section, which broadly generalizes \cite[Theorem 6.1]{bien-hai-hue}.
	\begin{theorem}\label{theorem_subnormal skew linear group}
		Let $D$ be a division ring with center $F$ which contains at least four elements, $n$ a positive integer, and $\star$ an involution on $A:={\rm M}_n(D)$. If $G$ is a non-central subnormal subgroup of ${\rm GL}_n(D)$, then the following assertions are equivalent:
		\begin{enumerate}[font=\normalfont]
			\item $T_G\subseteq F$;
			\item $T_A\subseteq F$;
			\item $N_A\subseteq F$;
			\item $N_G\subseteq F$.
		\end{enumerate}
		 Moreover, if one of these conditions holds, then either
		\begin{enumerate}[font=\normalfont]
			\item[(i)] $n=1$ and $D$ is  a quaternion division algebra over $F$, or
			\item[(ii)] $n=2$, $D=F$ and $\star$ is the ordinary symplectic involution on ${\rm M}_2(F)$.
		\end{enumerate}
	\end{theorem}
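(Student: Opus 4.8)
The plan is to establish the four equivalences by exhibiting a cycle of implications, and then to deduce the structural conclusions (i)–(ii) from the already-proven Corollary~\ref{corollary_centrally finite case} together with Lemma~\ref{lemma_centrally finite}. The implications $(2)\Leftrightarrow(3)$ are essentially free: $T_A\subseteq F$ and $N_A\subseteq F$ are equivalent by Lemma~\ref{lemma_semi-prime} applied to the semiprime (indeed simple artinian) ring $A={\rm M}_n(D)$. The implication $(2)\Rightarrow(1)$ is trivial, since $T_G\subseteq T_A$. Likewise $(3)\Rightarrow(4)$ is trivial from $N_G\subseteq N_A$. Thus the whole equivalence reduces to closing the loop, for which it suffices to prove one ``hard'' implication going from a condition on the subgroup $G$ back up to a condition on all of $A$; the natural choice is to prove $(1)\Rightarrow(2)$ (or symmetrically $(4)\Rightarrow(3)$).

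First I would dispose of the case $n=1$ separately, since there $A=D$ is a division ring and the machinery is lighter. When $n=1$, the hypothesis $(4)$ reads $N_G\subseteq F$ for a non-central subnormal $G\le D^\times$, and Lemma~\ref{lemma_restriction to group_scalar} immediately gives that $\star$ is scalar, i.e.\ $N_D\subseteq F=(3)$; the discussion preceding Corollary~\ref{corollary_centrally finite case} then yields $T_D\subseteq F=(2)$. Alternatively, starting from $(1)$, the key point is to upgrade $T_G\subseteq F$ to a statement about the division subring $F(G)$. The engine here is that a non-central subnormal subgroup of $D^\times$ generates a large division subring: by a Cartan–Brauer–Hua / Stuth-type argument (the same $N_{G_{i-1}}$-invariance technique used in Lemma~\ref{lemma_restriction to group_scalar} via \cite[Lemma 2.1(ii)]{bien-hai-hue}), either $G$ is central or $F(G)=D$. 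Once $F(G)=D$ is secured, Proposition~\ref{propostion_involution on subgroup} forces $D$ to be centrally finite, and then Lemma~\ref{lemma_centrally finite} (equivalently Corollary~\ref{corollary_centrally finite case}) gives $T_A\subseteq F$ and pins down the structure.

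For general $n$ the main obstacle is transferring a hypothesis on the subgroup $G\le{\rm GL}_n(D)$ to the full matrix ring, because the invariance arguments above are tailored to division rings. The plan is to use the theory of subnormal subgroups of ${\rm GL}_n(D)$: a non-central subnormal subgroup is, by the standard normal-subgroup structure theory, ``large'' in the sense that it contains a non-central subnormal subgroup of the relevant reduced norm-one or commutator subgroup, and one can leverage this to show that the $F$-subalgebra generated by $G$ is all of $A$, or that $T_G\subseteq F$ already forces the involution to be symplectic on a generating set and hence on $A$. Concretely, I expect to apply Lemma~\ref{lemma_restriction to group} to the subring $F[G]\subseteq A$ to see that $\star$ restricts to a symplectic involution there, then invoke \cite[Theorem 2.3]{Chacron.2016a.paper} to make it scalar on $F[G]$, and finally argue—using the simplicity of $A$ and that a non-central subnormal $G$ cannot be centralized by a proper nonzero ideal—that $F[G]$ is dense enough that $\star$ is scalar on all of $A$, giving $(3)$.

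The step I anticipate being genuinely delicate is precisely this last transfer for $n\ge 2$: controlling how the subnormal subgroup $G$ generates $A$ as an $F$-algebra, since subnormality is weaker than normality and the centralizer arguments that work verbatim in the division-ring case ($n=1$) must be replaced by ideal-theoretic or density arguments in ${\rm M}_n(D)$. I would therefore expect the proof to either cite a structural result on subnormal subgroups of ${\rm GL}_n(D)$ guaranteeing that the $F$-span of such a $G$ equals $A$, or to first reduce to the centrally finite case (via a Kaplansky/Jacobson bounded-degree argument as in Lemma~\ref{lemma_centrally finite}) and then finish with Corollary~\ref{corollary_centrally finite case}, whose two alternatives are exactly the claimed (i) and (ii). The degree-counting in Lemma~\ref{propostion_dimensions} is what rules out all $(n,k)$ except $(1,2)$ and $(2,1)$, producing the quaternion algebra and the ${\rm M}_2(F)$ symplectic cases respectively.
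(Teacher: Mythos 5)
Your outline follows essentially the same route as the paper's proof: the trivial inclusions $(2)\Rightarrow(1)$ and $(3)\Rightarrow(4)$, Lemma~\ref{lemma_semi-prime} for $(2)\Leftrightarrow(3)$, Lemma~\ref{lemma_restriction to group_scalar} for $(4)\Rightarrow(3)$, Stuth's theorem together with Proposition~\ref{propostion_involution on subgroup} in the case $n=1$, and for $n\geq 2$ exactly the structural result you predicted the proof would need: by \cite[Theorem 4]{Pa_Mahdavi-Hezavehi_1998} a non-central subnormal subgroup of ${\rm GL}_n(D)$ contains ${\rm SL}_n(D)$, and then Rosenberg's Cartan--Brauer--Hua theorem for matrix rings \cite[Corollary 1]{Pa_Rosenberg_1956} gives $F[G]=F[{\rm SL}_n(D)]={\rm M}_n(D)$, after which Lemma~\ref{lemma_restriction to group} delivers $(1)\Rightarrow(2)$ and Lemma~\ref{lemma_centrally finite} with Corollary~\ref{corollary_centrally finite case} produces the dichotomy (i)/(ii). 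Your vaguer fallback (density or ideal-theoretic arguments) is not needed; the paper uses precisely the ``$F$-span of $G$ equals $A$'' route you named first.

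There is, however, one step in your $n=1$ argument that is stated backwards, and it occurs in an implication the cycle genuinely needs: with only $(2)\Rightarrow(1)$, $(2)\Leftrightarrow(3)$, $(3)\Rightarrow(4)$ and $(4)\Rightarrow(3)$ in hand, assertion $(1)$ never feeds back into the loop, so $(1)\Rightarrow(2)$ must actually be proved. You write that after Stuth ($F(G)=D$) and Proposition~\ref{propostion_involution on subgroup} ($[D:F]<\infty$), Lemma~\ref{lemma_centrally finite} ``gives $T_A\subseteq F$''; but Lemma~\ref{lemma_centrally finite} and Corollary~\ref{corollary_centrally finite case} take $T_A\subseteq F$ as a \emph{hypothesis} --- at that stage $T_G\subseteq F$ only tells you, via Lemma~\ref{lemma_restriction to group}, that $\star$ restricts to a symplectic involution on the subring $F[G]$, which a priori says nothing about $T_D$. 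The paper bridges this with \cite[Lemma 2.3]{Pa_hai-thin_2009}: since $[D:F]<\infty$, one has $F[G]=F(G)=D$, so Lemma~\ref{lemma_restriction to group} applies to all of $D$ and yields $T_D\subseteq F$, i.e.\ $(2)$. This is a small and entirely fixable omission (the analogous issue for $n\geq 2$ is already resolved by your $F[G]=A$ claim); everything else in your plan coincides with the paper's argument.
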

	\begin{proof}
		We divide our situation into two possible cases:
		
		\smallskip 
		
		\textbf{\textit{Case 1:}} $n=1$.
		
		 In this case we have $A=D$. Firstly, we show that  (1) $\Longleftrightarrow$ (2). It is clear that (2) implies (1).  Now, assume  $T_G\subseteq F$. Because $G$ is non-central subnormal subgroup of $D^\times$,  it follows from Stuth's Theorem (see \cite[Theorem 1]{Pa_Stuth_1964}) that $F(G)=D$, and so $[D:F]<\infty$ by Proposition \ref{propostion_involution on subgroup}. By \cite[Lemma 2.3]{Pa_hai-thin_2009}, it follows that $D=F(G)=F[G]$, and so $\star$ is a symplectic involution on $D$ by Lemma \ref{lemma_restriction to group}. This implies that $T_D\subseteq F$, and so (2) holds.
		 
		 As $A$ is a simple ring, the equivalence (2) $\Longleftrightarrow$ (3) follows from Lemma \ref{lemma_semi-prime}. The implication (3) $\Longrightarrow$ (4) follows immediately from the fact that $N_G\subseteq N_A$. Finally, the implication (4) $\Longrightarrow$ (3) follows from Lemma \ref{lemma_restriction to group_scalar}. We have just proved the equivalences of (1), (2), (3) and (4) for the case $n=1$. To prove (i), assume that (2) holds. Then, by Proposition \ref{propostion_involution on subgroup}, we get $[D:F]<\infty$. Thus, the assertion (i) follows from Corollary \ref{corollary_centrally finite case}.
		
		\smallskip 
		
		\textbf{\textit{Case 2:}} $n>1$.
		
		We prove the equivalences of (1), (2), (3), (4) for this case. Firstly, not that the implication (2) $\Longrightarrow$ (1) is trivial. To prove the implication (1) $\Longrightarrow$ (2), we first claim that  $F[G]={\rm M}_n(D)$.  Indeed, by \cite[Theorem 4]{Pa_Mahdavi-Hezavehi_1998},  ${\rm SL}_n(D)\subseteq G$ because $G$ is a non-central subnormal subgroup of ${\rm GL}_n(D)$. Hence, $F[G]$ contains $F[{\rm SL}_n(D)]$ which is normalized by ${\rm GL}_n(D)$. According to \cite[Corollary 1]{Pa_Rosenberg_1956}, we conclude that $F[G]=F[{\rm SL}_n(D)]={\rm M}_n(D)$. Now, if (1) holds, then $T_G\subseteq F$, and by Lemma \ref{lemma_restriction to group},  $\star$ is a symplectic involution on ${\rm M}_n(D)$, and so (2) follows. Thus, we have proved the equivalence of (1) and (2). The proof of the equivalences of the remaining assertions in this case is similar as in  \textbf{\textit{Case 1}}.
		
		For the proof of (ii), assume that (2) holds. Firstly, we prove that $D=F$. Assume by contrary that $D$ is non-commutative, which implies that $D$ is infinite. We have shown above that  $F[G]={\rm M}_n(D)$. Now, with a reference to Lemma \refeq{lemma_restriction to group}, we conclude that the involution $\star$ on ${\rm M}_n(D)$ is symplectic, and so by Lemma \ref{lemma_centrally finite}, it follows that $D$ is a field, a contradiction. Therefore, we have $D=F$. By applying Corollary \ref{corollary_centrally finite case}, we must have $n=2$, and $\star$ is the ordinary symplectic involution on ${\rm M}_2(F)$.
	\end{proof}
	\begin{corollary}\label{corollary_subnormal skew linear group}
		Let $D$ be a division ring with center $F$ which contains at least four elements, $n$ a positive integer, $G$ a non-central subnormal subgroup of ${\rm GL}_n(D)$, and $\star$  an involution on ${\rm M}_n(D)$. Then, the conditions ${\rm (C1)}$ -- ${\rm (C5)}$ are all equivalent for ${\rm M}_n(D)$, and these conditions are also equivalent to the followings:
		\begin{enumerate}[font=\normalfont]
		\item$x+x^\star$ is central for all $x\in G$.
		\item $xx^\star$ is central for all $x\in G$.
	\end{enumerate}
	Moreover, if one of these assertions holds, then either 
			\begin{enumerate}[font=\normalfont]
		\item[(i)] $n=1$ and $D$ is  a quaternion division algebra over $F$, or
		\item[(ii)] $n=2$, $D=F$ and $\star$ is the ordinary symplectic involution on ${\rm M}_2(F)$.
	\end{enumerate}
	\end{corollary}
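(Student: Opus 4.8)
The plan is to use Theorem~\ref{theorem_subnormal skew linear group} as the backbone and to graft the remaining conditions onto the equivalence class it already produces. First I would record the dictionary: since $Z({\rm M}_n(D))=F$, condition (C4) is literally $T_A\subseteq F$, condition (C5) is $N_A\subseteq F$, assertion (1) is $T_G\subseteq F$, and assertion (2) is $N_G\subseteq F$. With this identification, Theorem~\ref{theorem_subnormal skew linear group} gives at once the equivalences (C4)$\Leftrightarrow$(C5)$\Leftrightarrow$(1)$\Leftrightarrow$(2) together with the structural dichotomy (i)/(ii) whenever any of them holds. Next, since $A={\rm M}_n(D)$ is simple and hence semiprime, \cite[Theorem 2.3]{Chacron.2016a.paper} (the result underlying Lemma~\ref{lemma_semi-prime}) adjoins (C1) to this class. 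Thus (C1), (C4), (C5), (1), (2) are already mutually equivalent, and each forces (i) or (ii).

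It then remains to fold in the two local conditions (C2) and (C3). One direction is free: the trivial implications recorded in the Introduction give (C1)$\Rightarrow$(C2) and (C1)$\Rightarrow$(C3). So the whole statement reduces to the converses (C2)$\Rightarrow$(C1) and (C3)$\Rightarrow$(C1). I would organize these around whether $D$ is centrally finite. If $D$ \emph{is} centrally finite, then $A={\rm M}_n(D)$ is a PI-ring by the Amitsur--Levitzki theorem (cf. Proposition~\ref{simple PI}), so \cite[Theorem 4.7]{Chacron.2017a.paper} yields (C1)$\Leftrightarrow$(C2)$\Leftrightarrow$(C3); combined with the previous paragraph all seven conditions become equivalent, and Theorem~\ref{theorem_subnormal skew linear group} (with Corollary~\ref{corollary_centrally finite case} supplying the explicit symplectic form in case (ii)) gives the structural conclusion.

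The real work is the centrally infinite case, where I must show that (C2) and (C3) \emph{both fail}. Here Theorem~\ref{theorem_subnormal skew linear group} and \cite[Theorem 2.3]{Chacron.2016a.paper} already force (C1), (C4), (C5), (1), (2) to be false, so the point is to exclude the a priori weaker conditions. My plan is to prove that either local condition forces $A$ to be algebraic over $F$; granting this, $A$ is a \emph{non-commutative} simple ring (non-commutative because a non-central subnormal $G$ cannot exist in an abelian ${\rm GL}_n(D)$) that is algebraic over its center, so \cite[Theorem 2.13]{Chacron.2017b.paper} applies, forcing $A$ to be a quaternion algebra over $F$; then $n^2[D:F]=4$ and $D$ is centrally finite, contradicting the standing assumption. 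The local condition (C3) would be handled through the same circle of ideas, either by the (C3)-analogue of \cite[Theorem 2.13]{Chacron.2017b.paper} or by first reducing (C3) to (C2), after which the centrally finite case applies.

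The hard part will be exactly this algebraicity step: showing that the local Engel condition (C2), or the local power-commuting condition (C3)---both far weaker than the symplectic condition (C4)---already preclude transcendental elements for an \emph{arbitrary} involution. The difficulty is that $x^\star$ must be controlled relative to the powers of $x$ for every $x\in A$; expanding $d_x^N(x^\star)=\sum_{k=0}^{N}\binom{N}{k}(-1)^k x^k x^\star x^{N-k}$ produces a bimodule relation one must show cannot vanish for any $N$ once $x$ is transcendental. This is genuinely nontrivial but true: a direct computation in the Weyl division ring with its natural involution (fixing $p$, negating $q$) shows that for $x=pq^2$ the iterated commutators $d_x^N(x^\star)$ are nonzero multiples of $q^{N+1}$ for all $N$, so (C2) already fails there. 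Converting such evidence into a general proof of algebraicity from (C2)/(C3) is where I expect the decisive effort to lie; once $D$ is known to be centrally finite, all remaining equivalences follow mechanically from \cite[Theorem 2.3]{Chacron.2016a.paper} and \cite[Theorem 4.7]{Chacron.2017a.paper}.
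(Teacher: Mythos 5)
Your assembly of the easy equivalences is correct and matches the paper: the dictionary identifying (C4) with $T_A\subseteq F$, (C5) with $N_A\subseteq F$, and assertions (1), (2) with $T_G\subseteq F$, $N_G\subseteq F$, then Theorem~\ref{theorem_subnormal skew linear group} plus \cite[Theorem 2.3]{Chacron.2016a.paper} (semiprimeness of the simple ring ${\rm M}_n(D)$) to adjoin (C1), is exactly the paper's route, and your centrally finite subcase for (C2)/(C3) via Amitsur--Levitzki and \cite[Theorem 4.7]{Chacron.2017a.paper} is also fine. The genuine gap is the one you name yourself: in the centrally infinite case you must show that (C2) and (C3) fail, and your plan --- prove that either local condition forces $A$ to be algebraic over $F$, then invoke \cite[Theorem 2.13]{Chacron.2017b.paper} to reach a quaternion algebra and a contradiction --- is left entirely unexecuted. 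The Weyl division ring computation shows at most that (C2) fails for one element of one ring; it does not begin to show that an arbitrary involution on ${\rm M}_n(D)$ satisfying $\mathrm{d}_x^N(x^\star)=0$ for every $x$ precludes transcendental elements, and for (C3) you offer only the hope of ``the (C3)-analogue'' or an unspecified reduction to (C2). As it stands, the implications (C2)$\Rightarrow$(C1) and (C3)$\Rightarrow$(C1) are unproved, so the corollary is not established by your argument.

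It is worth noting how the paper deals with this: it does not prove these implications either, but disposes of them in one sentence, citing that since ${\rm M}_n(D)$ is a simple ring the equivalences of (C1)--(C5) follow from \cite{Chacron.2017a.paper} and \cite{Chacron.2016a.paper}, and then derives everything else from Theorem~\ref{theorem_subnormal skew linear group} exactly as you do. So the intended fix is not to carry out your algebraicity programme --- which, as you observe, would amount to redoing the hard content of Chacron's papers --- but simply to quote the relevant results of \cite{Chacron.2017a.paper} and \cite{Chacron.2017b.paper} for simple rings as a black box. Your instinct that real difficulty hides here is sound: the hypotheses of the theorems as recalled in the Introduction (PI for \cite[Theorem 4.7]{Chacron.2017a.paper}, algebraicity for \cite[Theorem 2.13]{Chacron.2017b.paper}) do not on their face cover a centrally infinite $D$, so the step deserves a precise citation rather than a loose one; but a blind proof that stops at ``this is where the decisive effort lies'' has a hole at precisely that point.
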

	\begin{proof}
		As ${\rm M}(D)$ is a simple ring, the equivalences of (C1)--(C5) follow form \cite{Chacron.2017a.paper}  and \cite{Chacron.2016a.paper}. The conditions (1) and (2) are respectively equivalent to $T_G\subseteq F$ and $N_G\subseteq F$. Thus, all remaining assertions follow from Theorem \ref{theorem_subnormal skew linear group}.
	\end{proof}
	
	\section{Almost locally simple artinian algebras with involution}
	In this section, we try to extend Theorem \ref{theorem_subnormal skew linear group} for almost locally simple artinian rings. The main result of this section is Theorem \ref{theorem_main-locally simple artin} below, in which the subnormal subgroup $G$ must be assumed to be non-abelian, rather than being non-central as in Theorem \ref{theorem_subnormal skew linear group}. 
	
	\begin{proposition}\label{simplicity}
		Every almost locally simple ring is simple. 
	\end{proposition}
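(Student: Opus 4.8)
The plan is to prove simplicity directly, by showing that every nonzero two-sided ideal $I$ of an almost locally simple ring $A$ must coincide with $A$; since $A$ is unital with $1\neq 0$, this is precisely what simplicity requires. The mechanism is a local-to-global argument: a single nonzero element of $I$ already lies in a finitely generated subalgebra, which by hypothesis embeds in a genuine simple subring of $A$, and inside that subring the ideal theory is transparent.

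First I would fix a nonzero two-sided ideal $I$ and choose some $0\neq a\in I$. Letting $B$ be the subalgebra generated by $a$, which is finitely generated and contains the identity $1_A$, the defining property of an almost locally simple ring furnishes a simple subring $C$ of $A$ with $B\subseteq C$; since $1_A\in B\subseteq C$, the identity of $C$ is $1_A$. Next I would check that $I\cap C$ is a two-sided ideal of $C$: for $x\in I\cap C$ and $c\in C$, the products $cx$ and $xc$ belong to $I$ (because $I$ is an ideal of $A$ and $c\in A$) and to $C$ (because $C$ is closed under multiplication), hence to $I\cap C$. This ideal is nonzero because $a\in I\cap C$, so the simplicity of $C$ forces $I\cap C=C$, whence $1_A=1_C\in I$ and therefore $I=A$.

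The only point requiring genuine care is the bookkeeping of the identity element: one must be sure that the simple subring $C$ supplied by the hypothesis shares the identity of $A$, so that the conclusion $1_C\in I$ really reads $1_A\in I$. This is exactly where the unitality convention for subalgebras enters, since $1_A\in C$ forces $1_C=1_A$ by uniqueness of the identity of a ring. The remaining verifications, namely that $I\cap C$ is an ideal of $C$ and is nonzero, are routine.
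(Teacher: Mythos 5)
Your proof is correct and follows essentially the same route as the paper's: pick $0\neq a\in I$, locate a simple subring $C$ containing it via the almost locally simple hypothesis, observe that $I\cap C$ is a nonzero ideal of $C$, and conclude $1\in I$. Your extra attention to the identity bookkeeping (that $1_A\in C$ forces $1_C=1_A$) is a detail the paper leaves implicit, but the argument is the same.
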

	\begin{proof}
		Let $A$ be an almost locally simple ring. Assume that $I$ is a non-zero ideal of $A$, and $0 \ne a \in I$. Then, there exists a simple subring $A_0$ of $A$ such that $a\in A_0$. It follows that $a\in I_0:= A_0\cap I$, which is an ideal of $A_0$. Since $A_0$ is simple and
		$I_0\ne 0$, we conclude that $I_0=A_0$ from which it follows that $1\in I_0\subseteq I$.  This
		implies that $I=A$, and so $A$ is simple.
	\end{proof}
	
	\begin{proposition}\label{theorem_locally simple artin}
		Let $A$ be an  almost locally simple artinian algebra over its center $F$, and  $\star$  an involution on $A$. If $T_A\subseteq F$, then one of the following assertions hold:
		\begin{enumerate}[font=\normalfont]
			\item[(i)] $A=F$, or $A$ is a quaternion division algebra over its center $F$, or
			\item[(ii)]$A\cong{\rm M}_2(F)$ and $\star$ is the ordinary symplectic involution on $A$.
		\end{enumerate}
	\end{proposition}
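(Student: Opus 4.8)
The plan is to show that the hypothesis $T_A\subseteq F$ already forces $A$ to be a finite-dimensional central simple $F$-algebra, after which the conclusion is exactly Corollary \ref{corollary_centrally finite case}. Thus the whole argument funnels the global condition on $T_A$ into the centrally finite setting treated earlier. First I would record that $A$ is simple: every finitely generated $F$-subalgebra of $A$ sits inside a simple artinian algebra, so $A$ is almost locally simple and hence simple by Proposition \ref{simplicity}. In particular $A$ is a unital semiprime ring, so Lemma \ref{lemma_semi-prime} applies, and the assumption $T_A\subseteq F=Z(A)$ yields $N_A\subseteq F$ as well. Equivalently, the involution $\star$ is simultaneously symplectic and scalar.

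The key step is to extract degree-two algebraicity from this. For an arbitrary $a\in A$, set $s=a+a^\star$ and $p=aa^\star$; by the previous paragraph both $s$ and $p$ lie in $F$. Writing $a^\star=s-a$ and using that $s$ is central, one computes $p=aa^\star=a(s-a)=sa-a^2$, so that $a^2-sa+p=0$. Hence every element of $A$ is a root of a monic polynomial of degree at most $2$ over $F$; that is, $A$ is algebraic of bounded degree $2$ over $F$. By Kaplansky's theorem \cite[Theorem~3]{Pa_Kaplansky_1948}, $A$ then satisfies a polynomial identity, so $A$ is a simple PI-algebra over $F$.

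Finally, Proposition \ref{simple PI} upgrades this to the statement that $A$ is a finite-dimensional central simple $F$-algebra, whence $A\cong {\rm M}_n(D)$ for some $n\geq 1$ and some centrally finite division ring $D$ with $Z(D)=F$ by the Wedderburn--Artin theorem. Since $T_{{\rm M}_n(D)}=T_A\subseteq F$, Corollary \ref{corollary_centrally finite case} applies and gives precisely the two alternatives in the statement, with the case $n=1,\ D=F$ recorded as $A=F$. I expect no serious obstacle; the two points needing care are confirming that $A$ is genuinely semiprime so that Lemma \ref{lemma_semi-prime} is available, and observing that the relation $a^2-sa+p=0$ relies on $s$ being central (so that $a$ and $s$ commute). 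Everything else is a direct appeal to the results already established, so the difficulty is really concentrated in the single algebraicity computation that drives the passage to the PI case.
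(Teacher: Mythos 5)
Your proof is correct, but it takes a genuinely different and more economical route than the paper's. The paper argues \emph{locally}: after invoking Proposition \ref{simplicity}, it takes an arbitrary finite subset $S$ of $A$, places the subalgebra it generates inside a simple artinian algebra $B\cong{\rm M}_k(E)$, restricts $\star$ to $B$ via Lemma \ref{lemma_restriction}, applies the centrally finite machinery of Section~4 to pin down $B$ (a field, inside ${\rm M}_4(Z(B))$, or ${\rm M}_2(Z(B))$), concludes that $A$ satisfies the product identity $S_2S_4S_8=0$ of standard polynomials, and only then applies Kaplansky's primitive-PI theorem and Wedderburn--Artin. You instead argue \emph{globally}: simplicity makes $A$ semiprime, so Lemma \ref{lemma_semi-prime} converts $T_A\subseteq F$ into $N_A\subseteq F$, the quadratic relation $a^2-sa+p=0$ (where you correctly flag that $as=sa$ is what makes the computation $p=a(s-a)=sa-a^2$ legitimate, and that $p\in F$ is genuinely needed, not just $s\in F$) shows $A$ is algebraic of bounded degree $2$ over $F$, and then Kaplansky's bounded-degree theorem plus Proposition \ref{simple PI} finish the reduction to Corollary \ref{corollary_centrally finite case}. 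Your citation of \cite[Theorem~3]{Pa_Kaplansky_1948} for the bounded-degree step matches the paper's own usage in Proposition \ref{propostion_involution on subgroup}; in effect you apply globally the same quadratic trick the paper only deploys locally in Lemma \ref{lemma_centrally finite} and Proposition \ref{propostion_involution on subgroup}. Two remarks on what each approach buys. Your argument uses the ``almost locally simple artinian'' hypothesis only through Proposition \ref{simplicity}, so it actually proves the stronger statement that the conclusion holds for \emph{any} unital simple algebra over its center carrying an involution of the first kind with central traces -- a genuine strengthening. On the other hand, the paper's localization template is not wasted effort: it is precisely the scaffolding reused in Theorem \ref{theorem_main-locally simple artin}, where the trace condition is imposed only on a subnormal subgroup $G$ rather than on all of $A$, so the global quadratic trick is unavailable and one must pass through finitely generated subalgebras and Theorem \ref{theorem_subnormal skew linear group}. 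For the proposition as stated, though, your route is complete and cleaner.
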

	\begin{proof}
		 In view of Proposition \ref{simplicity}, $A$ is simple, and hence $A$ is a prime ring. Let $S$ be a finite subset of $A$. Then, the $F$-subalgebra of $A$ generated by $S$ is contained in some simple artinian algebra, say $B$. It follows that $B\cong {\rm M}_k(E)$, where $k\geq 1$ and $E$ is a  division ring. As $F\subseteq Z(B)$, by Lemma \ref{lemma_restriction}, the restriction of $\star$ to $B$ is also an involution on $B$ which is certainly symplectic. According to Proposition  \ref{propostion_involution on subgroup}, we conclude that $E$ is a centrally finite division ring. Hence, in view of Corollary \ref{corollary_centrally finite case}, it is easy to conclude that there are three possibilities for $B$:  $B$ is a field; $B$ is contained in the matrix ring $\mathrm{M}_4(Z(B))$; and $B=\mathrm{M}_2(Z(B))$. In view of the Armitsur-Levitzki Theorem (see \cite[Theorem~1]{amitsur-levitzki}), $B$ satisfies the standard polynomial identity $S_{2n}=0$ for $n=1,2$ or $4$. Being a subset of $B$, $S$ satisfies the same polynomial identity as $B$. Since $S$ was taken arbitrarily in $A$, we can conclude that $A$ satisfies the polynomial identity $S_2S_4S_8=0$. By Kaplansky's Theorem (see \cite[Theorem 1]{Pa_Kaplansky_1948}), it follows that $A$ is a simple artinian ring. According to the Wedderburn-Artin Theorem, $A\cong {\rm M}_n(D)$, for some $n\geq1$ and some centrally finite division ring $D$.  Finally, all conclusions follows from Corollary \ref{corollary_centrally finite case}.
	\end{proof}
		
	\begin{theorem}\label{theorem_main-locally simple artin}
			Let $A$ be an almost locally simple artinian algebra with center $F$ which contains at least four elements, $G$ be an non-abelian subnormal subgroup of $A^\times$, and $\star$  an involution on $A$. Then, the following assertions are equivalent:
			\begin{enumerate}[font=\normalfont]
				\item $T_G\subseteq F$;
				\item $T_A\subseteq F$;
				\item $N_A\subseteq F$;
				\item $N_G\subseteq F$.
			\end{enumerate}
			Moreover, if one of these conditions holds, then $A\cong\mathrm{M}_n(D)$ for some positive integer $n$, and some centrally finite division ring $D$; and  either
			\begin{enumerate}[font=\normalfont]
				\item[(i)] $n=1$, and $A$ is  a quaternion division algebra over $F$, or
				\item[(ii)] $n=2$, and $A=F$ and $\star$ is the ordinary symplectic involution on ${\rm M}_2(F)$.
			\end{enumerate}
	\end{theorem}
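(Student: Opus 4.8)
The plan is to show that \emph{each} of the four conditions forces $A$ to be a finite-dimensional central simple $F$-algebra, i.e.\ $A\cong{\rm M}_n(D)$ with $D$ centrally finite; once this is known, the entire statement follows by quoting Theorem~\ref{theorem_subnormal skew linear group}. This quotation is legitimate because a non-abelian $G$ is automatically non-central (a central subgroup of $A^\times$ lies in $F^\times$ and so is abelian), so $G$ is then a non-central subnormal subgroup of ${\rm GL}_n(D)$, and Theorem~\ref{theorem_subnormal skew linear group} simultaneously gives the equivalence of (1)--(4) and the dichotomy (i)/(ii). Two links are free: since $A$ is simple by Proposition~\ref{simplicity} it is semiprime, so (2)$\Leftrightarrow$(3) is Lemma~\ref{lemma_semi-prime}; and (2)$\Rightarrow$(1), (3)$\Rightarrow$(4) are trivial from $T_G\subseteq T_A$ and $N_G\subseteq N_A$. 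Hence each of the four conditions implies one of (1) or (4), and it suffices to prove that (1) (respectively (4)) yields finite-dimensionality.

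I would first treat (1)$\Rightarrow$(2). Put $R=F[G]$. By Lemma~\ref{lemma_restriction to group}, $T_G\subseteq F$ makes $\star|_R$ a symplectic involution, so $T_R\subseteq F$; note that $R$ is genuinely $\star$-invariant precisely because $g^\star\in F+Fg$ for each $g\in G$, that $G$ stays subnormal in $R^\times$ (intersect a subnormal series of $A^\times$ with $R^\times$), and that $F[G]=R$ by construction. The core step is to prove $R$ is finite-dimensional central simple over $F$. For every $x\in R$ one has $x^\star=(x+x^\star)-x$ with $x+x^\star\in F$, so $x^2-(x+x^\star)x+x^\star x=0$; upgrading the symplectic involution to a scalar one via Chacron's theorem \cite[Theorem~2.3]{Chacron.2016a.paper} places $x^\star x$ in the centre, whence $R$ is algebraic of bounded degree $2$ over its centre and therefore satisfies a polynomial identity by \cite[Theorem~3]{Pa_Kaplansky_1948}. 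Being a prime PI-algebra, $R$ is finite-dimensional over its centre and, by Wedderburn--Artin, $R\cong{\rm M}_p(\Delta)$ with $\Delta$ centrally finite.

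Applying Theorem~\ref{theorem_subnormal skew linear group} to $R$ itself (valid now, with $G$ non-central subnormal in $R^\times$, $F[G]=R$, and $|Z(R)|\ge|F|\ge4$) forces $[R:Z(R)]\le4$, and a direct trace computation in the two resulting models---a quaternion algebra or ${\rm M}_2(Z(R))$ with the ordinary symplectic involution---shows $T_R=Z(R)$; since $T_R\subseteq F$ this gives $Z(R)=F$, so $R$ is central simple over $F=Z(A)$. The double centralizer theorem for finite-dimensional central simple subalgebras then yields $A\cong R\otimes_F C_A(R)$, and as $C_A(R)=C_A(G)$ collapses to $Z(A)=F$ for the non-central subnormal $G$, I would conclude $A=R$, finite-dimensional central simple with $T_A=T_R\subseteq F$. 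This proves (2), and Proposition~\ref{theorem_locally simple artin} (or again Theorem~\ref{theorem_subnormal skew linear group}) supplies the dichotomy (i)/(ii). The implication (4)$\Rightarrow$(3) is handled symmetrically: from $N_G\subseteq F$ one forces $\star$ to be scalar along the subnormal series, as in Lemma~\ref{lemma_restriction to group_scalar} reduced to the local simple artinian pieces, again producing the bounded-degree relation and hence finite-dimensionality.

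I expect the main obstacle to be exactly the local-to-global passage hidden in the core step. The definition of almost locally simple artinian only places each finitely generated subalgebra inside \emph{some} simple artinian subalgebra, and such a subalgebra need not be $\star$-invariant, so the involution structure theory cannot be applied piece by piece. My workaround is to confine the analysis to the $\star$-invariant subring $R=F[G]$; the two delicate points there are \emph{(a)} that $R$ is prime (so that Chacron's symplectic--scalar equivalence applies and $x^\star x$ becomes central) and \emph{(b)} that the centralizer of the non-central subnormal group $G$ reduces to $F$, which is what finally identifies $A$ with the finite-dimensional algebra $R$. Both \emph{(a)} and \emph{(b)} I would establish by pulling the question back, through the subnormal series and the generation theorems already invoked for Theorem~\ref{theorem_subnormal skew linear group} (Stuth; Mahdavi--Hezavehi; Rosenberg), into the local simple artinian pieces where those tools are available.
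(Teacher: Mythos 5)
Your outer reductions are all correct and match the paper's: non-abelian forces non-central, Proposition~\ref{simplicity} plus Lemma~\ref{lemma_semi-prime} give (2)$\Leftrightarrow$(3), the inclusions $T_G\subseteq T_A$, $N_G\subseteq N_A$ give the trivial implications, and the final dichotomy does come from Theorem~\ref{theorem_subnormal skew linear group} once $A\cong{\rm M}_n(D)$ with $D$ centrally finite. Your instinct to work inside the $\star$-invariant subring $R=F[G]$ is also well taken --- indeed the paper is itself slightly cavalier here, asserting under hypothesis (1) alone that $\star$ restricts to the local piece ${\rm M}_k(E)$, although Lemma~\ref{lemma_restriction} needs $T_B\subseteq Z$, which is not available; the honest repair is exactly that $F[G_1]$ is $\star$-invariant by Lemma~\ref{lemma_restriction to group} and equals ${\rm M}_k(E)$ for $k\ge2$ via Rosenberg. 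But your core step has genuine gaps. The decisive one: Chacron's upgrade from symplectic to scalar (\cite[Theorem 2.3]{Chacron.2016a.paper}) requires $R$ to be \emph{semiprime}, and semiprimeness of $F[G]$ is never proved --- a unital subring of a simple ring can easily fail to be semiprime (upper triangular matrices inside ${\rm M}_2(F)$). Your proposed fix, ``pull the question back through the subnormal series into the local pieces,'' is a promissory note whose redemption is precisely the paper's central device, which you never state: fix $a,b\in G$ with $ab\ne ba$ and, for each finite subset $S$, choose a simple artinian piece ${\rm M}_k(E)$ containing $S\cup\{a,b\}$, so that $G_1=G\cap{\rm GL}_k(E)$ is a \emph{non-central} subnormal subgroup of ${\rm GL}_k(E)$ (this is exactly why the theorem assumes non-abelian rather than non-central: $Z(E)$ may strictly contain $F$, so a merely non-central $G$ could become central in a piece). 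With that device, semiprimeness of $R$, the centralizer collapse $C_A(G)=F$, and even the invertibility of elements of $Z(R)$ all follow by routing each element through some $F[G_1]$; without it, none of your points (a), (b) is supported.

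Second, ``being a prime PI-algebra, $R$ is finite-dimensional over its centre'' is false as a general citation: Posner's theorem makes a prime PI ring only an \emph{order} in a finite-dimensional central simple algebra, and \cite[Theorem~3]{Pa_Kaplansky_1948} only yields the PI --- moreover your quadratic relation $x^2-(x+x^\star)x+x^\star x=0$ has $x^\star x\in Z(R)$, not in $F$, so the bounded-degree-$2$ algebraicity is over the commutative domain $Z(R)$, and Kaplansky's theorem (stated over a field) does not directly apply until you know $Z(R)$ is a field; that in turn again needs the localization device (a central element of $R$ lies in $Z({\rm M}_k(E))=Z(E)$, hence is invertible). These holes are patchable, but notice that once you possess the $\{a,b\}$-device the paper's route is strictly shorter and makes your whole superstructure unnecessary: apply Theorem~\ref{theorem_subnormal skew linear group} \emph{locally} to each piece with the group $G_1$ (both hypotheses (1) and (4) localize, since $F\subseteq Z(E)$), conclude each piece has degree at most $2$, so by Amitsur--Levitzki $A$ satisfies the fixed identity $S_2S_4S_8=0$, and then Kaplansky's theorem for the simple (hence primitive) ring $A$ gives $A\cong{\rm M}_n(D)$ finite-dimensional at once --- no Chacron on $R$, no trace computation to pin down $Z(R)$, and no double centralizer splitting $A\cong R\otimes_F C_A(R)$ are needed. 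As written, your proposal identifies the right obstacle but does not close it.
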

	\begin{proof}
		Let $S$ be an arbitrary finite subset of $A$. Fix $a,b\in G$ such that $ab\ne ba$.  Then, as we have shown in the proof of Theorem \ref{theorem_locally simple artin}, we conclude that $F$-subalgebra of $A$ generated by $S\cup \{a,b\}$ is contained in a matrix ring ${\rm M}_k(E)\subseteq A$, where $k\geq 1$ and $E$ is a division ring. Also, we have $F\subseteq Z(E)$, and the restriction of $\star$ to ${\rm M}_k(E)$ is also an involution on ${\rm M}_k(E)$. Let $G_1=G\cap {\rm GL}_k(E)$. Then, $G_1$ is a subnormal subgroup of ${\rm GL}_k(E)$. Because $a,b\in G_1$, it follows that $G_1$ is not contained in the center of ${\rm M}_k(E)$, which should be identified with $Z(E)$. Now, assume that (1) holds. Then, we have $g+g^\star\in F\subseteq  Z(E)$ for all $g\in G$. In particular, $h+h^\star\in Z(E)$ for all $h\in G_1$. Hence, we can apply Theorem \ref{theorem_subnormal skew linear group} for the matrix ring ${\rm M}_k(E)$ to conclude that $E$ is finite dimensional over its center $Z(E)$. By the same arguments used in the proof of Proposition \ref{theorem_locally simple artin}, we get that $A\cong {\rm M}_n(D)$, for some $n\geq1$ and some centrally finite division ring $D$.  Thus, the assertions (i) and (ii) follow from Theorem \ref{theorem_subnormal skew linear group}.
	\end{proof}
	\begin{corollary}\label{corollary_locally simple artin}
		Let $A$ be an almost locally simple artinian algebra with center $F$ which contains at least four elements, $G$  an non-abelian subnormal subgroup of $A^\times$, and $\star$  an involution on $A$. Then, the conditions ${\rm (C1)}$ -- ${\rm (C5)}$ are equivalent for $A$, and these conditions are also equivalent to the following conditions:
		\begin{enumerate}[font=\normalfont]
			\item$x+x^\star$ is central for all $x\in G$.
			\item $xx^\star$ is central for all $x\in G$.
		\end{enumerate}
		Moreover, if one of these assertions holds, then either 
		\begin{enumerate}
			\rm\item[(i)]\textit{$A$ is a quaternion division algebra over its center $F$, and $\star$ is the symplectic involution, or}
			\rm\item[(ii)]\textit{$A\cong{\rm M}_2(F)$ and $\star$ is the ordinary symplectic involution on $A$.}
		\end{enumerate}
	\end{corollary}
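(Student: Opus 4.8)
The plan is to deduce everything from Theorem~\ref{theorem_main-locally simple artin}, following the template of the proof of Corollary~\ref{corollary_subnormal skew linear group}. The first step is to observe that an almost locally simple artinian algebra is almost locally simple, hence simple by Proposition~\ref{simplicity}, and in particular semiprime; this is exactly the ambient hypothesis required to invoke Chacron's theorems. The second step is purely a matter of dictionary: condition $\mathrm{(C4)}$ says precisely that $T_A\subseteq F$, condition $\mathrm{(C5)}$ says precisely that $N_A\subseteq F$, and the two displayed conditions (1) and (2) of the corollary say precisely that $T_G\subseteq F$ and $N_G\subseteq F$. Throughout I would use the trivial implications $\mathrm{(C5)}\Rightarrow\mathrm{(C4)}\Rightarrow\mathrm{(C1)}\Rightarrow\mathrm{(C2)}$ and $\mathrm{(C1)}\Rightarrow\mathrm{(C3)}$, which hold in any ring.

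Once the dictionary is in place, Theorem~\ref{theorem_main-locally simple artin} does the main work: it asserts the mutual equivalence of $T_G\subseteq F$, $T_A\subseteq F$, $N_A\subseteq F$ and $N_G\subseteq F$, which is to say that $\mathrm{(C4)}$, $\mathrm{(C5)}$, (1) and (2) are all equivalent, and that whenever they hold $A$ is either a quaternion division algebra over $F$ with symplectic involution or is isomorphic to $\mathrm{M}_2(F)$ with the ordinary symplectic involution. To insert $\mathrm{(C1)}$ into this class I would quote \cite[Theorem~2.3]{Chacron.2016a.paper}, giving $\mathrm{(C1)}\Leftrightarrow\mathrm{(C4)}\Leftrightarrow\mathrm{(C5)}$ for any semiprime ring; since $A$ is semiprime this applies verbatim. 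At this point $\mathrm{(C1)}$, $\mathrm{(C4)}$, $\mathrm{(C5)}$, (1), (2) are proved equivalent, and the structural dichotomy (i)/(ii) is established.

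The remaining and genuinely delicate point is to absorb the two weak local conditions $\mathrm{(C2)}$ and $\mathrm{(C3)}$ into the equivalence class. The forward implications are free, since $\mathrm{(C1)}\Rightarrow\mathrm{(C2)},\mathrm{(C3)}$; and the reverse implications are clear once it is known that $A$ satisfies a polynomial identity, because \cite[Theorem~4.7]{Chacron.2017a.paper} gives $\mathrm{(C1)}\Leftrightarrow\mathrm{(C2)}\Leftrightarrow\mathrm{(C3)}$ for semiprime PI rings and, by the structure part above, any $A$ lying in the equivalence class is finite dimensional over $F$ and hence PI. The obstacle is that, a priori, $A$ need not be PI, so one cannot start directly from $\mathrm{(C2)}$ or $\mathrm{(C3)}$. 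The route I would take is a local reduction in the spirit of Proposition~\ref{theorem_locally simple artin}: for a fixed $x\in A$ the $F$-subalgebra generated by $x$ and $x^\star$ is $\star$-invariant, so by Proposition~\ref{propostion_restrict subring} the involution restricts to it, the almost-local hypothesis embeds it in a simple artinian ring $\mathrm{M}_k(E)$, and a subring of such a ring is PI. Transferring $\mathrm{(C2)}$ or $\mathrm{(C3)}$ to these finite pieces and upgrading them to $\mathrm{(C1)}$ uniformly in $x$ is the step I expect to require the most care; once it is done, $\mathrm{(C2)}$ and $\mathrm{(C3)}$ join the equivalence class and the conclusions (i)--(ii) are inherited directly from Theorem~\ref{theorem_main-locally simple artin}.
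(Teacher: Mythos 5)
Your overall skeleton is the paper's: the paper proves this corollary by declaring it ``similar to Corollary \ref{corollary_subnormal skew linear group}'', i.e.\ $A$ is simple by Proposition \ref{simplicity} (hence semiprime), the conditions (1) and (2) are just translations of $T_G\subseteq F$ and $N_G\subseteq F$, Theorem \ref{theorem_main-locally simple artin} then gives the equivalence with ${\rm (C4)}$ and ${\rm (C5)}$ together with the dichotomy (i)/(ii), and the insertion of ${\rm (C1)}$ via \cite[Theorem 2.3]{Chacron.2016a.paper} is exactly what you do. Up to that point your proposal is correct and matches the intended argument.

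The genuine gap is your treatment of ${\rm (C2)}$ and ${\rm (C3)}$, which you yourself leave open (``the step I expect to require the most care''), so the reverse implications ${\rm (C2)}\Rightarrow{\rm (C1)}$ and ${\rm (C3)}\Rightarrow{\rm (C1)}$ are not proved in the proposal. Moreover, the repair you sketch would fail as stated: its key claim, that a subring of a simple artinian ring is PI, is false in this setting. The almost locally simple artinian hypothesis only places $F[x,x^\star]$ inside some ${\rm M}_k(E)$ with $E$ an \emph{arbitrary} division ring, and ${\rm M}_k(E)$ is PI precisely when $E$ is centrally finite --- centrally infinite $E$ is exactly what this class allows (it is strictly larger than the almost locally finite-dimensional central simple class, cf.\ Proposition \ref{proposition_subclass}). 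Two further obstructions: even if a piece were PI, \cite[Theorem 4.7]{Chacron.2017a.paper} requires semiprimeness, and a finitely generated subalgebra of ${\rm M}_k(E)$ need not be semiprime (e.g.\ triangular matrices); and under ${\rm (C2)}$ alone the containing algebra ${\rm M}_k(E)$ need not be $\star$-invariant, since the paper's restriction device (Lemma \ref{lemma_restriction}) uses $T_B\subseteq Z$, i.e.\ condition ${\rm (C4)}$, which is not yet available. For what it is worth, your instinct that the PI hypothesis is an issue is fair --- the paper disposes of ${\rm (C2)}$, ${\rm (C3)}$ by bare citation of \cite{Chacron.2017a.paper} and \cite{Chacron.2016a.paper} without verifying that $A$ is PI --- but your substitute argument does not close the step either, so as submitted the proposal does not establish the full equivalence claimed in the corollary.
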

	\begin{proof}
		The proof is similar to that of Corollary \ref{corollary_subnormal skew linear group}.
	\end{proof}
We close the paper with an interesting question which we shall investigate in a near future.

\begin{remark}
	Let $D$ be a division ring with involution $\star$ of the first kind, and $K$ an arbitrary subfield of $D$ which is unnecessary invariant under $\star$. If $T_D\cup N_D\subseteq K$, then every element $a\in D$ satisfies the quadratic equation $x^2-sx+p=0$, where $s=a+a^\star$ and $p=a^\star a$. This means that $D$ is left algebraic of bound degree $2$ over $K$. According to \cite[Theorem 1.3]{bell}, we get that $D$ is a centrally finite division ring with $[D:Z(D)]\leq 4$; that is, $D$ is either a field or a quaternion division ring. At this point, it is reasonable to ask if the main results of the current paper should be  also true if we replace the center of $D$ by an arbitrary its subfield ?
\end{remark}
	
\end{document}